\documentclass[smallextended,envcountsect]{svjour3}
\usepackage{pifont}
\usepackage{epsfig,color,graphicx,subfigure,amsmath,amssymb,multirow,lscape}
\usepackage[colorlinks,linkcolor=blue,citecolor=blue]{hyperref}
\usepackage{tikz,cite}
\usetikzlibrary{arrows, automata}
\smartqed
\usepackage{graphicx}
\usepackage{amsmath,amssymb}

\usepackage{verbatim}

\journalname{}

\def\pn{\par\smallskip\noindent}

\newtheorem{lem}{Lemma}[section]

\newtheorem{rem}{Remark}[section]
\newtheorem{fact}{Fact}[section]
\newtheorem{algo}{Algorithm}[section]

\def\be{\begin{eqnarray}}
\def\ee{\end{eqnarray}}
\def\ben{\begin{eqnarray*}}
\def\een{\end{eqnarray*}}
\def\ba{\begin{array}}
\def\ea{\end{array}}
\def\bi{\begin{itemize}}
\def\ei{\end{itemize}}
\def\proof {\pn {Proof.} }

\def\endproof{\hfill $\Box$ \vskip .5cm}

\def\cS{{\mathcal S}}

\def\bR{{\mathbb R}}

 \DeclareMathOperator*{\dom}{dom}

\def\prox{{\rm Prox}}
\def\null{{\rm null}}

\def\[{\begin{equation}}
\def\]{\end{equation}}

\newcommand{\R}{\mathbb R}
\newcommand{\N}{\mathbb N}

\begin{document}

\title{Averaged proximal reflected gradient method for monotone variational inequalities}
\titlerunning{Averaged proximal reflected gradient method}
\author{Xiaokai Chang$^{1}$ \and Jialin Li$^{1}$\and Jun Yang$^{2}$}

\institute{\ding {41} Jun Yang \at xysyyangjun@163.com.\\
 This work was supported by the National Natural Science Foundation of China (12571320 and 12161053) and the Natural Science Foundation for Distinguished Young Scholars of Gansu Province (22JR5RA223).\\
1 \ School of Science, Lanzhou University of Technology, Lanzhou, Gansu, P. R. China.\\
2 \ School of Mathematics and Statistics, Xianyang Normal University,
               Xianyang, Shaanxi, P. R. China }

\date{Received: date / Revised: date / Accepted: date}

\maketitle

\begin{abstract}
Projected reflected gradient (PRG) method proposed by Malitsky is efficient for solving monotone variational inequality (MVI), while the existing upper bound of step size is not tight due to the inequality scaling in the theoretical analysis.
In this paper, we construct an averaged variant of PRG method for more general MVI and present a novel Lyapunov function to establish convergent theory. This averaged PRG method provides an improvement of the golden ratio algorithm [Y, Malitsky, Math. Program., 184, 383--410, 2020], and the involved step size is compatible with that for the classical methods, such as Popov's extragradient and forward-reflected-backward methods. Moreover, a fully adaptive strategy without linesearch is presented to adjust step sizes, which generates closed-form and potentially much larger step sizes. Numerical experiments on the Nash$-$Cournot equilibrium, HpHard, and image reconstruction problems demonstrate that the proposed algorithm significantly outperforms existing state-of-the-art methods.
\end{abstract}

\keywords{Variational inequalities \and proximal reflected gradient method \and averaging step \and fully adaptive stepsizes}

\subclass{49M29 \and  65K10 \and 65Y20 \and 90C25 }

\section{Introduction}
\label{sec_introduction}

In this work, we propose an algorithm for solving the monotone variational inequality (MVI) problem
\be\label{mVI}
\text{find} ~~ x^*\in \R^q ~~\text{s.t.}~~ g(x) - g(x^*) + \langle x-x^*, F(x^*)\rangle \geq 0, \quad \forall x \in \R^q ,
\ee
where we assume that
\begin{itemize}
    \item (C1) the solution set $\cS$ of \eqref{mVI} is nonempty;
    \item (C2) $g: \R^q  \to (-\infty, +\infty]$ is a proper convex lower semicontinuous (lsc) function;
    \item (C3) $F: \operatorname{dom} g \to \R^q $ is monotone and (locally) Lipschitz-continuous with constant $L_F > 0$.
\end{itemize}

The MVI is a useful way to reduce many different problems that arise in optimization, PDE, control theory, games theory to a common problem \eqref{mVI}. The important example is a convex-concave saddle point problem:
\be\label{pd-prob}
\min_{x\in \R^k}\max_{y\in \R^p} ~~g_1(x)+\Phi(x,y)-g_2(y),
\ee
where $g_1:\R^k \rightarrow(-\infty, +\infty]$ and $g_2:\R^p \rightarrow(-\infty, +\infty]$ are extended real-valued proper closed and convex functions.
$\Phi: \operatorname{dom} g_1 \times \operatorname{dom} g_2 \to \mathbb{R}$ is a smooth convex-concave function. By writing down the first-order optimality condition, it is easy to see that problem \eqref{pd-prob} is equivalent to \eqref{mVI} with $F$ and $g$ defined as
\ben
z = (x, y), \quad
F(z) = \begin{bmatrix}
\nabla_x \Phi(x, y) \\
-\nabla_y \Phi(x, y)
\end{bmatrix}, \quad
g(z) = g_1(x) + g_2(y).
\een

If $g= \iota_C$ is the indicator function of the set \(C\), that is, \(\iota_C(x) = 0\) if \(x \in C\) and otherwise \(\iota_C = \infty\), problem \eqref{mVI} reduces to
\[
\text{find } x^* \in C ~~ \text{s.t.} ~~ \langle F(x^*), y - x^* \rangle \geq 0, \ \forall y \in \R^q, \label{VI-S}
\]
where \(C\) is a closed and convex subset of \(\R^q\). Problem \eqref{VI-S} has been extensively studied in the literature, yielding weak and linear convergence results \cite{Yekini2017Strong,Yekini2020Projection,Yang2019Strong,Duong2018Weak}. For a continuously differentiable convex function \(f: \R^q \to (-\infty, +\infty)\) with its gradient denoted by \(\nabla f = F\), then problem \eqref{VI-S} is equivalent to the optimization problem
\[
\min_{x \in \R^q} f(x) + \iota_C(x).
\]

\subsection{Related methods}
It is known that, problem \eqref{mVI} is a typical example where the forward-backward (FB) method will not work.
Thus in the literature, many efficient methods for solving MVI \eqref{mVI} were modification of the FB method. One of the earliest, and arguably most popular schemes, was the extragradient (EG) method which dates back to extragradient method presented by Korpelevich \cite{Korpelevich1976} and Popov \cite{Popov1980}. Following the EG method, Tseng \cite{Tseng2000A} proposed forward-backward-forward (FBF) method, which required two values of
$F$ and only a single projection per iteration. Recently, Malitsky \cite{Malitsky2020GFBS} proposed a forward-reflected-backward (FRB) method. For a fixed stepsize $\tau > 0$, the FRB scheme can be described as
\[\label{FRB}
x_{n+1} = \prox_{\tau g}\bigl(x_n - \tau (2 F(x_n) - F(x_{n-1})\bigr),
\]
and converges weakly if the step size is chosen to satisfy $\tau < \frac{1}{2L_F}$.
Under the same assumptions as Tseng's FBF method, the FRB converges but its implementation requires only one forward evaluation per iteration instead of two.

Another method for solving \eqref{VI-S} is projected reflected gradient method \cite{PRG2015Malitsky}, which can be generalized to more general problem \eqref{mVI} with iterates
\[\label{PRG}
x_{n+1} = \prox_{\tau g}\bigl(x_n - \tau F(2x_n - x_{n-1})\bigr).
\]
This approach is termed the proximal reflected gradient (PRG for short).
PRG converges weakly when $\tau \in \bigl(0, (\sqrt{2} - 1)/L_F\bigr)$. It is easy to see that PRG \eqref{PRG} is equivalent to FRB \eqref{FRB} if operator $F$ is linear. The analysis of PRG in \cite{PRG2015Malitsky} requires $\tau \in \bigl(0, (\sqrt{2} - 1)/L_F\bigr)$, which according to the above consideration is not tight when $F$ is linear. For the special problem \eqref{VI-S}, a refined convergence analysis of the Popov's projection algorithm and PRG \cite{PRG2015Malitsky} was made in \cite{refined2025Popov} by exploiting the special properties of the projection operator, the upper bound of step size was enlarged further to $1/(2L_F)$. However, it is not clear if the upper bound of step size can be enlarged to $1/(2L_F)$ for iterate \eqref{PRG} to solve more general problem \eqref{mVI}.

Let $\phi =\frac{1+\sqrt{5}}{2}$ be the golden ratio, that is $\phi^2 = 1 + \phi$. Malitsky \cite{Malitsky2019Golden} introduced a golden ratio convex combination step
into the FB method and presented a golden ratio algorithm (GRAAL) for solving MVI problem \eqref{mVI}, having iterates
\begin{equation}\label{gr-alg}
z_{n}=\frac{\psi-1}{\psi} x_{n} + \frac{1}{\psi}z_{n-1},~~~x_{n+1} = \prox_{\tau g}(z_n - \tau F(x_{n})),
\end{equation}
where $\psi\in(1,\phi]$ and $\tau\in(0, \psi/(2L_F)]$. In \cite{Bregman2023GRA}, Bregman modification to GRAAL was presented and analysed. For the special case of $g=\iota_C$, some extensions of GRAAL and aGRAAL were considered in \cite{Beyond2023}, where the boundedness of iterates was analyzed directly via induction. Unfortunately, this induction is only for the special case of $g=\iota_C$, and does not seem to provide any route for generalization. The extensions of convex combination proposed by \cite{Malitsky2019Golden} in the primal-dual setting are also proposed in \cite{ChY2020Golden,ChY2021GoldenLinesearch,chyz-IMA} for solving saddle point problems. Moreover, a fully adaptive version of the golden ratio algorithm (aGRAAL) was presented in \cite{Malitsky2019Golden}, which does not require a linesearch to be run, and its step sizes are estimated using current information about the iterates. In practical applications of aGRAAL, stepsizes are mostly limited by its growth rate, which degrades the adaptive performance, see Section \ref{sec-experiments}.

\subsection{Motivation and Contributions}
Considering the loose step-size upper bound of the PRG \eqref{PRG} and the performance degradation of the fully adaptive step-size scheme in \cite{Malitsky2019Golden} caused by growth rate limitations, in this paper we introduce and analyze a new method for solving MVI \eqref{mVI}. For a fixed step size $\tau > 0$, the proposed scheme can be briefly described as
\[
x_{n+1} = \frac{1}{2}x_n + \frac{1}{2} \prox_{\tau g}\bigl(x_n - \tau F(2x_n - x_{n-1})\bigr).
\]
We refer to this scheme as the \textit{averaged proximal reflected gradient (aPRG) method} due to the involving of averaging step. It is worth noting that our analysis is entirely different than existing ideas for PRG \cite{PRG2015Malitsky,refined2025Popov} and GRAAL \cite{Beyond2023,Malitsky2019Golden}, and hence is of interest in its own right. Our contributions are summarized as follows.
\bi
\item[(i)] We propose a new algorithm for solving \eqref{mVI}, which is an averaged and improved version of PRG \eqref{PRG}, and introduce a rigorous analysis to explore its convergence by constructing a refined Lyapunov function.
\item[(ii)] We establish convergence theory when the operator $F$ is global Lipschitz-continuous and the step size is chosen to satisfy $\tau < \frac{1}{L_F}$. This upper bound of step size in aPRG is compatible with that for FRB \eqref{FRB} when $F$ is linear and $g\equiv0$.
\item[(iii)] An adaptive strategy without linesearch is presented for estimating step size of aPRG, where only one parameter $r\in(0,1]$ needs to be determined and the step size is computed explicitly by \eqref{est-step}. The proposed adaptive strategy differs entirely from the conventional step-size rules for gradient-based algorithms, e.g., \cite{Malitsky2019Golden,Latafat2025Adaptive}, where the minimum of two terms is adopted.
    Our experiments show that setting $r = 0.1$ is consistently a proper choice for all tested problems.
\ei

\subsection{Organization}
The organization of the remaining paper is outlined as follows. Section \ref{sec:asmp} provides basic assumptions, necessary facts, and notation. The main algorithm, an averaged PRG method with fixed step size, is introduced in Section \ref{sec: main-algorithm}. Convergence results and sublinear convergence rate results are also established in this section.
In Section \ref{sec_adaptive}, we focus on the adaptive step sizes without linesearch, and establish convergence results under this adaptive strategy.
Section \ref{sec-experiments} presents numerical results on the Nash-Cournot equilibrium, HpHard, and image reconstruction problems. Comparisons with the state-of-the-art algorithms are included as well.
Finally, Section \ref{sec_conclusion} provides some concluding remarks.

\section{Assumptions and Preliminaries}\label{sec:asmp}
Let $h$ be any extended real-valued closed proper and convex function defined on $\R^q$.
The effective domain of $h$ is denoted by $\text{dom}(h) := \{ x\in\R^q: h(x) < \infty\}$, and the subdifferential of $h$ at $x\in \R^q$ is given by $\partial h(x) := \{\xi\in\R^q: \, h(y) \geq h(x) + \langle \xi, y-x\rangle \text{~for all~}y\in\R^q\}$. Furthermore, for $\alpha >0$, the proximal operator of $\alpha h$ is given by
\begin{eqnarray*}\label{def:prox}
  \prox_{\alpha h}(x) := \arg\min_{y\in \R^m } \Big\{h(y) + {1\over 2\alpha }\|y-x\|^2\Big\}, \quad x\in \R^q,
\end{eqnarray*}
which is uniquely well defined everywhere.

An  operator $F\colon\R^q \rightarrow\R^q$ is said to be monotone if
 $$ \langle x-y, F(x)-F(y)\rangle\geq 0,~~\forall x,y\in\R^q. $$
Operator $F$ is $L_F$-Lipschitz ($L_F>0$) continuous if
\be\label{Lip}
\|F(x)-F(y)\|\leq L_F\|x-y\|,~~\forall x,y\in\R^q,
\ee
and locally continuous, i.e.,  for any bounded subsets $H\subseteq \R^q$ there exists $L_F > 0$ such that \eqref{Lip} holds for any $x,y\in H$.

The following results are elementary and can be verified or proved straightforwardly. We formalize them as following facts as they will be useful in our analysis.

\begin{fact}\label{fact_proj}
Let $h: \R^q\rightarrow (-\infty, \infty]$ be an extended real-valued closed proper and convex function. Then, for any $\tau>0$ and $x\in \R^q$,  $z = \prox_{\tau h}(x)$ if and only if
$h(y) \geq h(z)+ {1\over \tau}\langle x-z, y-z\rangle$ for all $y\in \R^q$.
\end{fact}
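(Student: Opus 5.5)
We prove Fact \ref{fact_proj} directly from the optimality condition of the strongly convex problem that defines $\prox_{\tau h}(x)$. The plan is to show each direction separately, using only the definition of the subdifferential and a convex-combination argument.

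\emph{($\Rightarrow$).} Suppose $z=\prox_{\tau h}(x)$, so $z$ minimizes $\Psi(y):=h(y)+\frac{1}{2\tau}\|y-x\|^2$. Fix $y\in\R^q$; we may assume $y\in\dom h$, since otherwise the inequality is trivial. For $t\in(0,1)$ set $y_t=z+t(y-z)$. Minimality gives $\Psi(z)\le \Psi(y_t)$, and convexity gives $h(y_t)\le (1-t)h(z)+t h(y)$. Expanding the quadratic term yields
\[
\|y_t-x\|^2=\|z-x\|^2+2t\langle z-x,y-z\rangle+t^2\|y-z\|^2 .
\]
Substituting, cancelling $h(z)+\frac{1}{2\tau}\|z-x\|^2$, and dividing by $t>0$, we obtain
\[
0\le h(y)-h(z)+\frac{1}{\tau}\langle z-x,y-z\rangle+\frac{t}{2\tau}\|y-z\|^2 .
\]
Letting $t\downarrow 0$ gives $h(y)\ge h(z)+\frac{1}{\tau}\langle x-z,y-z\rangle$, which is the claimed inequality.

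\emph{($\Leftarrow$).} Suppose the inequality holds for all $y$. Adding $\frac{1}{2\tau}\|y-x\|^2$ to both sides and using the identity
\[
\|y-x\|^2=\|z-x\|^2+2\langle z-x,y-z\rangle+\|y-z\|^2,
\]
we get
\[
\Psi(y)\ge h(z)+\frac{1}{2\tau}\|z-x\|^2+\frac{1}{2\tau}\|y-z\|^2\ge \Psi(z).
\]
Hence $z$ is a minimizer of $\Psi$. Because $\Psi$ is strongly convex, its minimizer is unique, so $z=\prox_{\tau h}(x)$.

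Both directions are routine. The only point needing care is in ($\Rightarrow$), where one must restrict to $y\in\dom h$ and divide by $t$ before passing to the limit. Alternatively, the whole statement follows from the characterization $0\in\partial h(z)+\frac{1}{\tau}(z-x)$, which is exactly the stated inequality by the definition of $\partial h$; this route requires the sum rule, which holds here because the quadratic term is finite and continuous everywhere.
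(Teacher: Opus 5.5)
Your proof is correct. The paper gives no proof of Fact~\ref{fact_proj}; it lists it among results that are ``elementary and can be verified or proved straightforwardly.''

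The argument the authors presumably have in mind is the short route you mention at the end. Fermat's rule for $h+\frac{1}{2\tau}\|\cdot-x\|^2$, combined with the sum rule, gives $\frac{1}{\tau}(x-z)\in\partial h(z)$. By the paper's definition of $\partial h$, that is exactly the stated inequality. This route is quick but rests on the Moreau--Rockafellar sum rule.

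Your main argument avoids the sum rule and is self-contained:
\begin{itemize}
\item In ($\Rightarrow$), the convex-combination step computes the one-sided directional derivative by hand.
\item In ($\Leftarrow$), completing the square gives the sharper bound $\Psi(y)\ge\Psi(z)+\frac{1}{2\tau}\|y-z\|^2$. This already shows $z$ is the \emph{unique} minimizer, so you do not need a separate appeal to strong convexity.
\item Your route also shows that closedness of $h$ plays no role in the equivalence itself. It is needed only for existence of $\prox_{\tau h}(x)$.
\end{itemize}

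Three small points to tighten:
\begin{itemize}
\item In ($\Rightarrow$), the cancellation uses $h(z)<\infty$. This holds because $h$ is proper, so the objective is finite somewhere and hence finite at its minimizer.
\item In ($\Leftarrow$), note that the hypothesis itself forces $h(z)<\infty$, since otherwise $h\equiv+\infty$.
\item Rename your objective function. The symbol $\Psi$ already denotes the bifunction $\Psi(u,v)$ used in the paper's convergence analysis.
\end{itemize}
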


\begin{fact}\label{fact_ab}
Let $\{a_n\}$ and $\{b_n\}$ be two nonnegative  sequences.
\bi
\item (i) If there exists a positive integer $N$ such that $a_{n+1}\leq a_{n}-b_{n}$ for all $n\geq N$, then $\lim\limits_{n\to\infty}a_n$ exists, $\lim\limits_{n\to\infty}b_n=0$ and $\sum_{n=1}^{\infty}b_n < \infty$;
\item (ii) If there exists $\varepsilon \in (0,1)$ such that $a_{n+1}\leq \varepsilon a_{n} +b_n$ for all $n\geq1$ and $\sum\nolimits_{n=1}^{\infty}b_n < \infty$, then $\sum\nolimits_{n=1}^{\infty} a_{n}  <\infty$.
\ei
\end{fact}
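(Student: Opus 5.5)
We treat the two parts of Fact \ref{fact_ab} separately; both are elementary.

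\emph{Part (i).} For $n\ge N$ we have $a_{n+1}\le a_n-b_n\le a_n$, since $b_n\ge 0$. Hence $\{a_n\}_{n\ge N}$ is nonincreasing. It is also bounded below by $0$, so $\lim_{n\to\infty}a_n=:a^*$ exists and is finite. Rearranging gives $b_n\le a_n-a_{n+1}$. Summing from $n=N$ to $M$ telescopes:
\[
\sum_{n=N}^{M}b_n\le a_N-a_{M+1}\le a_N .
\]
The partial sums of the nonnegative series $\sum b_n$ are therefore bounded, and adding the finitely many terms $b_1,\dots,b_{N-1}$ gives $\sum_{n=1}^\infty b_n<\infty$. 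Finally, $b_n\to 0$ because the terms of a convergent series tend to zero.

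\emph{Part (ii).} We sum the recursion $a_{n+1}\le\varepsilon a_n+b_n$ over $n=1,\dots,M$:
\[
\sum_{n=2}^{M+1}a_n\le \varepsilon\sum_{n=1}^{M}a_n+\sum_{n=1}^{M}b_n .
\]
Write $S_M=\sum_{n=1}^{M}a_n$. The left side equals $S_{M+1}-a_1\ge S_M-a_1$, because $a_{M+1}\ge 0$. Using this and moving the $\varepsilon S_M$ term to the left gives
\[
(1-\varepsilon)S_M\le a_1+\sum_{n=1}^\infty b_n=:B<\infty .
\]
Since $\varepsilon\in(0,1)$, this yields $S_M\le B/(1-\varepsilon)$ uniformly in $M$. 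The partial sums of the nonnegative series $\sum a_n$ are nondecreasing and bounded, so $\sum_{n=1}^\infty a_n<\infty$.

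There is no real obstacle in either part. The only point needing care is in (ii): we bound $S_{M+1}$ from below by $S_M$ rather than attempting to bound $S_{M+1}$ directly, so that the same partial sum appears on both sides and can be absorbed using $1-\varepsilon>0$.
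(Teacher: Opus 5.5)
Your proof is correct in both parts. The paper gives no proof of this fact; it lists it among the elementary facts that ``can be verified or proved straightforwardly.'' Your argument is the standard verification: monotone convergence plus telescoping for (i), and, for (ii), summing the recursion and absorbing $\varepsilon S_M$ via $S_{M+1}-a_1\ge S_M-a_1$.
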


\begin{fact}\label{fact_uv}
For any $u, v, a, b\in \bR$ such that $u + v > 0$, there holds $\frac{uv}{u+v}(a+b)^2\leq ua^2+v b^2$.
\end{fact}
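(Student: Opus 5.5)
The last statement is Fact \ref{fact_uv}: for real $u,v,a,b$ with $u+v>0$ we have $\frac{uv}{u+v}(a+b)^2\le ua^2+vb^2$. I will prove it by writing the difference of the two sides as a single square divided by $u+v$.

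The first step is to multiply the claimed inequality by $u+v>0$, which preserves its direction. It then becomes equivalent to
\begin{equation*}
uv(a+b)^2\le (u+v)(ua^2+vb^2).
\end{equation*}

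The second step is to expand both sides. The right-hand side is $u^2a^2+uva^2+uvb^2+v^2b^2$. The left-hand side is $uva^2+2uvab+uvb^2$. Subtracting, the terms $uva^2$ and $uvb^2$ cancel, and we obtain
\begin{equation*}
(u+v)(ua^2+vb^2)-uv(a+b)^2=u^2a^2-2uvab+v^2b^2=(ua-vb)^2\ge 0.
\end{equation*}

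Dividing back by $u+v$ gives the exact identity
\begin{equation*}
ua^2+vb^2-\frac{uv}{u+v}(a+b)^2=\frac{(ua-vb)^2}{u+v}\ge 0,
\end{equation*}
which is the claim. Equality holds exactly when $ua=vb$.

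The expansion is routine, so the only point needing care is the sign hypothesis. The argument uses only $u+v>0$, not $u,v\ge 0$. This matters because the paper may later apply the fact with one of $u,v$ negative (for example, to absorb cross terms in the Lyapunov analysis), and the identity above remains valid in that case.
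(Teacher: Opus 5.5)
Your proof is correct and is exactly the routine verification the paper has in mind (the paper states this fact without proof as elementary): multiplying by $u+v>0$ reduces the claim to $(ua-vb)^2\ge 0$, which gives the identity $ua^2+vb^2-\frac{uv}{u+v}(a+b)^2=\frac{(ua-vb)^2}{u+v}$. Your closing remark is apt, since the paper later takes $u=2\delta_n$ and $v=\frac{3}{2}-2\delta_n$, and $v$ is negative whenever $\delta_n>\frac{3}{4}$ (e.g.\ $\delta_n=\rho>1$ when $L_n=0$); there $\frac{uv}{u+v}<0$, so the scalar fact cannot be chained with $\|y_{n+1}-y_n\|\le a+b$, but the vector form of your identity, $u\|A\|^2+v\|B\|^2-\frac{uv}{u+v}\|A+B\|^2=\frac{\|uA-vB\|^2}{u+v}$ with $A=x_n-y_n$ and $B=y_{n+1}-x_n$, gives the needed bound directly.
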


\begin{fact}
For any $x, y, z\in \bR^q$ and $\alpha\in\bR$, we have
\be
2\langle x-y, x-z\rangle&=&  \|x-y\|^2 +  \|x-z\|^2  -\|y-z\|^2,\label{id}\\
\|\alpha x+(1-\alpha)y\|^2&=& \alpha \|x\|^2+(1-\alpha)\|y\|^2-\alpha(1-\alpha)\|x-y\|^2.\label{id2}
\ee
\end{fact}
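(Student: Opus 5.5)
Both identities follow from bilinearity and symmetry of the Euclidean inner product on $\bR^q$. We expand squared norms via $\|u\|^2=\langle u,u\rangle$ and compare coefficients; no earlier result is needed.

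For \eqref{id}, we introduce the abbreviations $a:=x-y$ and $b:=x-z$. Then $y-z=(x-z)-(x-y)=b-a$. Expanding gives
\begin{equation*}
\|y-z\|^2=\|b-a\|^2=\|a\|^2+\|b\|^2-2\langle a,b\rangle .
\end{equation*}
Solving for $2\langle a,b\rangle$ gives $2\langle x-y,x-z\rangle=\|x-y\|^2+\|x-z\|^2-\|y-z\|^2$, which is \eqref{id}.

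For \eqref{id2}, we expand both sides in terms of the three quantities $\|x\|^2$, $\|y\|^2$ and $\langle x,y\rangle$ and match coefficients. The left-hand side is
\begin{equation*}
\|\alpha x+(1-\alpha)y\|^2=\alpha^2\|x\|^2+2\alpha(1-\alpha)\langle x,y\rangle+(1-\alpha)^2\|y\|^2 .
\end{equation*}
On the right-hand side, we substitute $\|x-y\|^2=\|x\|^2-2\langle x,y\rangle+\|y\|^2$. Then:
\begin{itemize}
\item the coefficient of $\|x\|^2$ is $\alpha-\alpha(1-\alpha)=\alpha^2$;
\item the coefficient of $\|y\|^2$ is $(1-\alpha)-\alpha(1-\alpha)=(1-\alpha)^2$;
\item the coefficient of $\langle x,y\rangle$ is $2\alpha(1-\alpha)$.
\end{itemize}
These agree with the left-hand side term by term, which proves \eqref{id2}. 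The identity holds for every real $\alpha$, not only for $\alpha\in[0,1]$.

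There is no genuine obstacle here. The only care needed is bookkeeping of signs in the cross terms, in particular the orientation $y-z=b-a$ in \eqref{id}, which does not affect $\|y-z\|^2$ anyway.
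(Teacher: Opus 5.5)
Your proof is correct: both identities follow from expanding squared norms by bilinearity and symmetry of the inner product, and your coefficient matching for \eqref{id2}, including the remark that it holds for every real $\alpha$, checks out. The paper gives no proof and simply calls these facts elementary and straightforward to verify, so direct expansion is exactly the intended argument.
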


\section{Averaged proximal reflected gradient method.}
\label{sec: main-algorithm}
This section presents the proposed proximal reflected gradient method with an averaging step and fixed step size for solving the MVI \eqref{mVI}. We then establish its convergence and convergence rate by constructing a refined Lyapunov function. The complete details of the proposed algorithm are given as follows.

\vskip5mm
\hrule\vskip2mm
\begin{algo}
[Averaged proximal reflected gradient (aPRG) method]\label{aPRG}
{~}\vskip 1pt {\rm
\begin{description}
\item[{\em Step 0.}] Let $0<\tau<\frac{1}{L_F}$.
Choose  $x_0\in \R^q$. Set $x_{-1} = x_0$ and $n=0$.
\item[{\em Step 1.}]Compute
\ben
x_{n+1} = \frac{1}{2}x_n + \frac{1}{2} \prox_{\tau g}\bigl(x_n - \tau F(2x_n - x_{n-1})\bigr).
\een
\item[{\em Step 2.}] Set $n\leftarrow n + 1$ and return to Step 1. 
  \end{description}
}
\end{algo}
\vskip1mm\hrule\vskip5mm

\begin{rem}\label{rem-cases}[Special cases of Algorithm \ref{aPRG}]
We consider three special cases where the proposed algorithm reduces to or coincides with existing known methods.
\bi
\item[(i)] If $F = 0$, Algorithm \ref{aPRG} reduces to an averaged version of proximal point algorithm, i.e.,
    \[
   x_{n+1} = \frac{1}{2}x_n + \frac{1}{2} \prox_{\tau g}\bigl(x_n\bigr).
    \]
\item[(ii)] If $g = \iota_C$, Algorithm \ref{aPRG} can be expressed as
    \[
    x_{n+1} = \frac{1}{2}x_n + \frac{1}{2} P_C\bigl(x_n - \tau F(2x_n - x_{n-1})\bigr),
    \]
    which coincides with an averaged version of \textit{projected reflected gradient method} \cite{PRG2015Malitsky}.
\item[(iii)] If $g=0$, Algorithm \ref{aPRG} becomes $x_{n+1} = x_n - \frac{\tau}{2} F(2x_n - x_{n-1})$. Under the change of variables $y_n = 2x_n - x_{n-1}$, this becomes $y_{n+1} = x_n - \tau F(y_n)$. Alternatively, Algorithm \ref{aPRG} can be expressed as the two step recursion
\[
\begin{cases}
x_{n+1} = x_n - \frac{\tau}{2} F(y_n), \\
y_{n+1} = x_{n+1} - \frac{\tau}{2} F(y_n).
\end{cases}
\]
This is exactly Popov's algorithm \cite{Popov1980} with step size $\frac{\tau}{2}$ for unconstrained variational inequality problems.
\ei
\end{rem}

Recall that the forward-backward method is not applicable to MVI \eqref{mVI}. To address this issue, the proposed aPRG method incorporates both averaging and extrapolation steps to modify the original framework, achieving guaranteed convergence and allowing step sizes compatible with existing methods. The following remark elaborates on the respective roles of the averaging and extrapolation steps.

\begin{rem}\label{rem-roles}
[Roles of averaging and extrapolation steps]
\bi
\item[(i)] If removing the 1/2-averaged step, i.e., $x_{n+1}= \prox_{\tau g}(x_{n}-\tau F(2x_n-x_{n-1}))$, Algorithm \ref{aPRG} reduces to PRG \cite{PRG2015Malitsky}, its convergence is established in \cite{PRG2015Malitsky} when $\tau\in(0, (\sqrt{2}-1)/L_F)$. Recall that the condition imposed on the step size $\tau$ is not tight; thus, we can conclude that the averaging step not only enables a larger step size but also obtains a compatible upper bound of step size.
\item[(ii)] When removing the extrapolation step, we obtain a 1/2-averaged forward-backward iteration
\be\label{averaged-only2}
x_{n+1}=\frac{1}{2}x_{n}+\frac{1}{2} \prox_{\tau g}(x_{n}-\tau F(x_n)),
\ee
which is not necessarily convergent for any $\tau>0$ from the concrete example ($g=0$) shown in Section \ref{sec:Examples}.
\ei
\end{rem}

The results presented in Remark \ref{rem-roles} shed light on the underlying mechanisms of the averaging and extrapolation steps. Specifically, the extrapolation step ensures convergence, whereas the 1/2-averaged step enables the use of large step sizes. Furthermore, leveraging this averaging framework, fully adaptive step-size strategies are developed in Section \ref{sec_adaptive}.

\subsection{Basic properties of the sequences generated by aPRG}
Recall $y_n=2x_n-x_{n-1}$ used in Remark \ref{rem-cases} again, Algorithm \ref{aPRG} can be expressed as
\[
\begin{cases}\label{alg-cc}
y_{n+1}=\prox_{\tau g}(x_n-\tau F(y_{n})),\\
x_{n+1}=\frac{1}{2} x_{n} + \frac{1}{2}y_{n+1}.
\end{cases}
\]
It is easy to observe that, scheme \eqref{alg-cc} is actually an improved version of golden ratio algorithm \cite{Malitsky2019Golden}, which broads the upper bound of convex combination parameter $\psi$ from the golden ratio value to 2 and improves step size condition from $\tau<\psi/(2L_F)$ to $\tau<1/L_F$.

By item (ii) of Fact \ref{fact_ab}, a useful property of the convex combination step $x_{n+1}=\frac{1}{2} x_{n} + \frac{1}{2}y_{n+1}$ can be observed.

\begin{lem}\label{lem-ccstep}
For any sequence $\{y_n\}$, let $\{x_n\}$ be generated by the convex combination step $x_{n+1}=\frac{1}{2} x_{n} + \frac{1}{2}y_{n+1}$ with $x_0=y_0$. If $\sum_{n=1}^{\infty}\|y_{n+1}-y_n\|^2<\infty$, then we have $\lim_{n\rightarrow\infty} \|x_n-y_n\| = 0$.
\end{lem}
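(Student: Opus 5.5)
We will derive a one-step recursion for the gap $x_n-y_n$ and then apply item (ii) of Fact \ref{fact_ab} with $a_n=\|x_n-y_n\|^2$. From the convex combination step, $x_{n+1}-y_{n+1}=\frac12 x_n-\frac12 y_{n+1}$. Writing $x_n-y_{n+1}=(x_n-y_n)-(y_{n+1}-y_n)$, this becomes
\begin{equation*}
x_{n+1}-y_{n+1}=\tfrac12(x_n-y_n)-\tfrac12(y_{n+1}-y_n).
\end{equation*}
This linear relation already shows that the gap contracts by a factor $\frac12$, up to a perturbation given by the consecutive differences of $\{y_n\}$.

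Next we square and bound. The cross term can be absorbed using $\|a+b\|^2\le(1+\epsilon)\|a\|^2+(1+1/\epsilon)\|b\|^2$, which is Fact \ref{fact_uv} applied with $u=1+\epsilon$ and $v=1+1/\epsilon$ (a norm-version of that fact follows by the same Cauchy--Schwarz argument). Taking $\epsilon=1$ gives
\begin{equation*}
\|x_{n+1}-y_{n+1}\|^2\le \tfrac12\|x_n-y_n\|^2+\tfrac12\|y_{n+1}-y_n\|^2 .
\end{equation*}
Set $a_n=\|x_n-y_n\|^2$, $b_n=\frac12\|y_{n+1}-y_n\|^2$ and $\varepsilon=\frac12\in(0,1)$. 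By hypothesis $\sum_n b_n<\infty$, so item (ii) of Fact \ref{fact_ab} gives $\sum_n a_n<\infty$. Hence $a_n\to0$, that is, $\|x_n-y_n\|\to0$.

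The argument is essentially routine. The only points needing care are the index bookkeeping (the hypothesis sums from $n=1$, so we apply the recursion for $n\ge1$, and finitely many initial terms do not affect summability) and the choice of $\epsilon$, which must keep the contraction factor $(1+\epsilon)/4$ strictly below $1$; $\epsilon=1$ does this. The assumption $x_0=y_0$ only fixes $a_0=0$ and is not otherwise essential.
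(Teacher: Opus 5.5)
Your proof is correct and follows the paper's argument: both derive $2(x_{n+1}-y_{n+1})-(x_n-y_n)=y_n-y_{n+1}$, obtain $\|x_{n+1}-y_{n+1}\|^2\le\frac12\|x_n-y_n\|^2+\frac12\|y_{n+1}-y_n\|^2$, and conclude with item (ii) of Fact \ref{fact_ab}. The only difference is cosmetic. The paper gets the inequality from the exact identity \eqref{id2}, writing $u_n=x_n-y_n$ and dropping the nonnegative term $2\|u_{n+1}-u_n\|^2$, whereas you use $\|a+b\|^2\le 2\|a\|^2+2\|b\|^2$.
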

\proof
Define $u_n :=x_n-y_n$, it is easy to verify from the convex combination step $x_{n+1}=\frac{1}{2} x_{n} + \frac{1}{2}y_{n+1}$, that $2 u_{n+1}- u_{n}= y_n-y_{n+1}$.
By using   \eqref{id2}, we obtain
\ben
2\|u_{n+1}\|^2-\|u_{n}\|^2+ 2\|u_{n+1}-u_n\|^2=\|y_n-y_{n+1}\|^2.
\een
This implies that $\|u_{n+1}\|^2\leq \frac{1}{2}\|u_{n}\|^2+\frac{1}{2}\|y_n-y_{n+1}\|^2$. It then follows from $\frac{1}{2}<1$,
item (ii) of Fact \ref{fact_ab} and   $\sum_{n=1}^{\infty}\|y_{n+1}-y_n\|^2<\infty$ that $\sum_{n=1}^{\infty}\|u_{n}\|^2<\infty$, which implies $\lim_{n\rightarrow\infty} \|u_n\|=\lim_{n\rightarrow\infty} \|x_n-y_n\| = 0$.
\endproof

For the MVI problem \eqref{mVI}, we define the bifunction as in \cite{Malitsky2019Golden} by
\be
 \Psi(u,v):=\langle F(u), v -u \rangle+g(v)-g(u).
\ee
It is clear that the MVI problem \eqref{mVI} is equivalent to the equilibrium problem: find $x^\star\in\R^q$ such that $\Psi(x^\star, x) \geq 0$, $\forall x\in\R^q$. Notice that for any fixed $x$, the function $\Psi(x, \cdot)$ is convex.

We define a Lyapunov function by
\be\label{defE-GRA}
E_n (x)&:=& 2\|x_{n}-x\|^2 + \zeta \| y_{n}-y_{n-1}\|^2-2\|x_{n}-y_n\|^2,
\ee
for any $x\in\R^q$ and some constant $\zeta\in(0,1)$. In sequel, we present two lemmas on the functions $\Psi(x,x_n)$ and $E_n(x)$.

\begin{lem}\label{lem1-GRA}
Let $\{(x_n, y_n)\}$ be generated by aPRG (Algorithm \eqref{aPRG}). Then for any $x\in\R^q$, we have
\[\label{anbn-relation-GRA}
2 \tau \Psi(x,y_n)+ E_{n+1}(x) \leq E_n(x)-R_n,
\]
where $R_n$ is given by
\be\label{Rn-PRG}
R_n :=(1-\zeta)2\|y_{n+1}-y_{n}\|^2,
\ee
and $\zeta\in(0,1)$ is the same as that used in \eqref{defE-GRA}.
\end{lem}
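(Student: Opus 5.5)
The plan is to combine two consecutive prox inequalities from Fact~\ref{fact_proj}, use monotonicity to pass from $F(x)$ to $F(y_n)$, and control the single cross term by the Lipschitz bound. Throughout I use the reformulation \eqref{alg-cc}. Two identities follow from it and will be used repeatedly:
\[
x_{n-1}-y_n = 2(x_n-y_n), \qquad x_n-y_{n+1}=2(x_n-x_{n+1}).
\]
I also assume that $\zeta$ is tied to the step size by $\tau L_F\le\zeta<1$. This is possible since $\tau<1/L_F$, and I expect the lemma to be understood with this choice.

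\textbf{Step 1 (two prox inequalities).} Apply Fact~\ref{fact_proj} to $y_{n+1}=\prox_{\tau g}(x_n-\tau F(y_n))$, tested at the arbitrary point $x$. Apply it again to $y_n=\prox_{\tau g}(x_{n-1}-\tau F(y_{n-1}))$, tested at $y_{n+1}$. Adding the two, the values $g(y_{n+1})$ cancel and we get
\[
\tau\bigl[g(x)-g(y_n)\bigr]+\tau\langle F(y_n),x-y_{n+1}\rangle+\tau\langle F(y_{n-1}),y_{n+1}-y_n\rangle \ge \langle x_n-y_{n+1},x-y_{n+1}\rangle+2\langle x_n-y_n,y_{n+1}-y_n\rangle .
\]
Next I split $\langle F(y_n),x-y_{n+1}\rangle=\langle F(y_n),x-y_n\rangle+\langle F(y_n),y_n-y_{n+1}\rangle$. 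Monotonicity gives $\langle F(y_n),y_n-x\rangle\ge\langle F(x),y_n-x\rangle$. Hence the left-hand side is at most
\[
-\tau\Psi(x,y_n)+\tau\langle F(y_n)-F(y_{n-1}),\,y_n-y_{n+1}\rangle .
\]

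\textbf{Step 2 (identities).} On the right-hand side, substitute $x_n-y_{n+1}=2(x_n-x_{n+1})$ and $y_{n+1}-x=2(x_{n+1}-x)-(x_n-x)$. Then expand both inner products with \eqref{id}, and use \eqref{id2} for $x_{n+1}=\tfrac12x_n+\tfrac12y_{n+1}$. After multiplying by $2$, the aim is to arrive at
\[
2\|x_{n+1}-x\|^2-2\|x_{n+1}-y_{n+1}\|^2+2\|y_{n+1}-y_n\|^2 \le 2\|x_n-x\|^2-2\|x_n-y_n\|^2-2\tau\Psi(x,y_n)+2\tau\langle F(y_n)-F(y_{n-1}),y_n-y_{n+1}\rangle .
\]
The terms $-2\|x_{n}-y_{n}\|^2$ and $-2\|x_{n+1}-y_{n+1}\|^2$ in $E_n$ should appear here naturally. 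One comes from $2\langle x_n-y_n,y_{n+1}-y_n\rangle$ via \eqref{id}; the other comes from $\|x_{n+1}-y_{n+1}\|=\|x_n-x_{n+1}\|$.

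\textbf{Step 3 (Lipschitz term).} By Cauchy--Schwarz, \eqref{Lip} and Young's inequality,
\[
2\tau\langle F(y_n)-F(y_{n-1}),y_n-y_{n+1}\rangle\le \tau L_F\|y_n-y_{n-1}\|^2+\tau L_F\|y_{n+1}-y_n\|^2 .
\]
With $\tau L_F\le\zeta$, the first term is absorbed by $\zeta\|y_n-y_{n-1}\|^2$ in $E_n$. The second is moved to the left, leaving $(2-\zeta)\|y_{n+1}-y_n\|^2=\zeta\|y_{n+1}-y_n\|^2+R_n$, where $R_n=(1-\zeta)2\|y_{n+1}-y_n\|^2$ as in \eqref{Rn-PRG}. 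This yields exactly $E_{n+1}(x)+R_n$ on the left, which is \eqref{anbn-relation-GRA}.

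\textbf{Expected obstacle.} The main obstacle is Step~2. One must verify that the cross terms combine precisely, with no leftover indefinite inner product, into $2\|x_{n+1}-x\|^2-2\|x_{n+1}-y_{n+1}\|^2+2\|y_{n+1}-y_n\|^2$ on one side and $2\|x_n-x\|^2-2\|x_n-y_n\|^2$ on the other. The coefficient $2$ is exactly what the averaging weight $\tfrac12$ is designed to produce. If a residual term appears, I would first try rewriting everything in terms of $x_n,x_{n+1},x_{n-1}$ only, using $y_k=2x_k-x_{k-1}$, and then recompleting squares.
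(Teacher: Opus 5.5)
Your proposal is correct and is essentially the paper's proof: the same two prox inequalities (the second tested at $y_{n+1}$ and rewritten via $x_{n-1}-y_n=2(x_n-y_n)$), monotonicity, then \eqref{id} and \eqref{id2} to reach exactly your Step~2 inequality (the paper's \eqref{ineq1}; the cross terms cancel with no residual, so the obstacle you flagged does not arise), and finally the Cauchy--Schwarz/Young bound on the $F$-difference term. Your assumption $\tau L_F\le\zeta$ is a slightly more general form of the paper's choice $\tau=\zeta/L_F$, and some such coupling between $\zeta$ and $\tau$ is needed for the lemma to hold.
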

\begin{proof}
By Fact~\ref{fact_proj} and the $y$-updating in \eqref{alg-cc}, we have
\be\label{eq:4-GRA}
\langle y_{n+1}-x_{n}+\tau F(y_{n}), x - y_{n+1}\rangle\geq \tau
    (g(y_{n+1})-g(x)) \quad \forall x\in \R^q,
\ee
and
\be\label{eq:5-GRA}
\langle y_{n}-x_{n-1}+\tau F(y_{n-1}), y_{n+1} - y_{n}\rangle\geq \tau
    (g(y_{n})-g(y_{n+1})).
\ee
Note that $y_{n}-x_{n-1}=2(y_n-x_{n})$, we can rewrite \eqref{eq:5-GRA} as
\be\label{eq:6-GRA}
\langle 2(y_n-x_{n})+\tau F(y_{n-1}), y_{n+1} - y_{n}\rangle\geq \tau
    (g(y_{n})-g(y_{n+1})).
\ee
Summation of \eqref{eq:4-GRA} and \eqref{eq:6-GRA} yields
\be\label{ineq-11}
\langle y_{n+1}-x_{n}, x - y_{n+1}\rangle&+&\langle 2(y_n-x_{n}), y_{n+1} - y_{n}\rangle +\tau \langle F(y_{n})-F(y_{n-1}), y_{n}-y_{n+1}\rangle\nonumber \\
&\geq& \tau \langle F(y_{n}), y_{n} -x \rangle+\tau
    (g(y_{n})-g(x))\nonumber \\
&\geq& \tau \langle F(x), y_{n} -x \rangle+\tau
    (g(y_{n})-g(x))=\tau \Psi(x,y_n),
    \ee
where the last inequality follows the monotonicity of $F$.

Now, by applying (\ref{id}) to the first two inner products on the left-hand-side of \eqref{ineq-11}, multiplying both sides by a factor $2$ and reorganizing the terms, we derive
\be\label{ineq11}
\|y_{n+1}-x\|^2 + 2 \tau \Psi(x,y_n)
& \leq & \|x_{n}-x\|^2  + 2\tau \langle Fy_{n}- Fy_{n-1},  y_n-y_{n+1}\rangle
-2\|x_{n}-y_n\|^2 \nonumber \\
&  & + \|y_{n+1}-x_{n}\|^2   -2\|y_{n+1}-y_n\|^2.
\ee
Recall $y_{n+1}=2x_{n+1}-x_{n}$ and $x_{n+1}-x_{n} = {1\over 2} (y_{n+1} - x_{n})$ again. Then,  it is easy to deduce from (\ref{id2}) that
\be
\|y_{n+1}-x_{n}\|^2&=&4\|y_{n+1}-x_{n+1}\|^2,\label{x-to-z}\\
\|y_{n+1}-y\|^2&=&\|2(x_{n+1}-y)-(x_{n}-y)\|^2\nonumber\\
&=&2\|x_{n+1}-y\|^2- \|x_{n}-y\|^2+2\|y_{n+1}-x_{n+1}\|^2,\label{x-to-z2}
\ee
where we use $\|x_{n+1}-y_{n+1}\|^2=\|x_{n+1}-x_{n}\|^2$.
By plugging \eqref{x-to-z} and \eqref{x-to-z2} into (\ref{ineq11}), we obtain
\be\label{ineq1}
&     &  2\|x_{n+1}-x\|^2-2\|x_{n+1}-y_{n+1}\|^2 + 2 \tau \Psi(x,y_n) \nonumber \\
& \leq & 2\|x_{n}-x\|^2-2\|x_{n}-y_{n}\|^2  + 2\tau \langle Fy_{n}- Fy_{n-1},  y_n-y_{n+1}\rangle \nonumber \\
&  &   -2\|y_{n+1}-y_n\|^2.
\ee
From $\tau <1/L_F$,  i.e., $\tau=\frac{\zeta}{L_F}$ for some $\zeta\in(0,1)$, it follows that
\ben
2\tau \langle F(y_{n})-F(y_{n-1}), y_{n}-y_{n+1}\rangle&\leq& 2\tau \| F(y_{n})-F(y_{n-1})\|\|y_{n}-y_{n+1}\|\nonumber \\
&\leq& \zeta (\|y_{n}-y_{n-1}\|^2+\|y_{n+1}-y_{n}\|^2).
\een
This together with \eqref{ineq1} and the definitions of $E_n(x)$ and $R_n$ leads to \eqref{anbn-relation-GRA}. The proof is completed.
\end{proof}

In sequel, the nonnegativity of $E_{n}(x^\star)$ for any $x^\star\in\cS$ is obtained, despite the presence of the third term in $E_{n}(x^\star)$, namely $-2\|x_{n}-y_n\|^2\leq 0$. This observation is crucial for obtaining convergence and convergence rate of aPRG.

\begin{lem}\label{lem2-GRA}
Let $E_n(x)$ be defined in \eqref{defE-GRA}. For any $x^\star\in\cS$, it holds
\[\label{an-bound-GRA}
E_{n}(x^\star)\geq(2-\zeta)\|y_{n}-x^\star\|^2\geq0,
\]
where $\zeta\in(0,1)$ is the same as that used in Lemma \ref{lem1-GRA}.
\end{lem}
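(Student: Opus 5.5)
The plan is to bound $E_n(x^\star)$ below using the optimality of $y_n$ together with the fact that $x^\star$ solves \eqref{mVI}. The negative term $-2\|x_n-y_n\|^2$ cannot be absorbed by the other terms of $E_n(x^\star)$ through pure algebra. The extra ingredient is the prox step that produced $y_n$, namely $y_n=\prox_{\tau g}(x_{n-1}-\tau F(y_{n-1}))$ from \eqref{alg-cc}, tested at the point $x^\star$. For $n=0$ we have $x_0=y_0=x_{-1}$, and with the convention $y_{-1}=y_0$ we get $E_0(x^\star)=2\|y_0-x^\star\|^2$, which already gives \eqref{an-bound-GRA}. So assume $n\ge 1$.

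\emph{Step 1 (variational inequalities).} Fact~\ref{fact_proj} with $y=x^\star$ gives
$\langle y_n-x_{n-1}+\tau F(y_{n-1}), x^\star-y_n\rangle\ge \tau(g(y_n)-g(x^\star))$.
Since $x^\star\in\cS$, applying \eqref{mVI} with $x=y_n$ gives
$g(y_n)-g(x^\star)\ge -\langle F(x^\star),y_n-x^\star\rangle$.
Adding the two inequalities yields
$\langle y_n-x_{n-1},x^\star-y_n\rangle\ge \tau\langle F(y_{n-1})-F(x^\star),y_n-x^\star\rangle$.
We split $F(y_{n-1})-F(x^\star)=(F(y_{n-1})-F(y_n))+(F(y_n)-F(x^\star))$. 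Monotonicity of $F$ makes the second part nonnegative. The Lipschitz bound with $\tau=\zeta/L_F$ bounds the first part, so the right-hand side is at least $-\zeta\|y_{n-1}-y_n\|\,\|y_n-x^\star\|$.

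\emph{Step 2 (rewriting the left side).} As in the proof of Lemma~\ref{lem1-GRA}, we use $y_n-x_{n-1}=2(y_n-x_n)$. Then identity \eqref{id}, applied to the vectors $y_n-x_n$ and $y_n-x^\star$, gives
\[
\langle y_n-x_{n-1},x^\star-y_n\rangle=\|x_n-x^\star\|^2-\|x_n-y_n\|^2-\|y_n-x^\star\|^2 .
\]
Multiplying the resulting inequality by $2$, we obtain
\[
2\|x_n-x^\star\|^2-2\|x_n-y_n\|^2\ge 2\|y_n-x^\star\|^2-2\zeta\|y_{n-1}-y_n\|\,\|y_n-x^\star\|.
\]

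\emph{Step 3 (Young's inequality).} Bound $2\|y_{n-1}-y_n\|\,\|y_n-x^\star\|\le \|y_n-y_{n-1}\|^2+\|y_n-x^\star\|^2$. Adding $\zeta\|y_n-y_{n-1}\|^2$ to both sides of the inequality from Step 2 then gives exactly
$E_n(x^\star)\ge(2-\zeta)\|y_n-x^\star\|^2$.
This is nonnegative since $\zeta<1$.

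The main obstacle is recognizing that the proof must invoke the prox characterization of $y_n$ and the solution property of $x^\star$, rather than work with the definition of $E_n$ alone. The coefficient $\zeta$ multiplying $\|y_n-y_{n-1}\|^2$ in \eqref{defE-GRA} is precisely what the Young step consumes. Once this is seen, the remaining steps are routine identities.
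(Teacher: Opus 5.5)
Your proof is correct and follows essentially the same route as the paper: the prox inequality for $y_n$ tested at $x^\star$, the relation $y_n-x_{n-1}=2(y_n-x_n)$ combined with identity \eqref{id}, monotonicity together with the solution property of $x^\star$, and the Lipschitz/Young bound with $\tau=\zeta/L_F$. Your explicit treatment of $n=0$ through the convention $y_{-1}=y_0$ is a small addition that the paper leaves implicit.
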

\proof
Recall $x_{n}-y_{n}=\frac{1}{2}(x_{n-1}-y_{n})$, it follows \eqref{id} that
\be\label{est-1-GRA}
\frac{1}{2}\|y_{n}-x_{n}\|^2+\frac{1}{2}\|{x^{\star}}-y_{n}\|^2&=& \frac{1}{2}\|x_{n}-{x^{\star}}\|^2+\langle x_{n}-y_{n}, ~ {x^{\star}}-y_{n} \rangle\nonumber\\
&=&\frac{1}{2}\|x_{n}-{x^{\star}}\|^2+\frac{1}{2}\langle x_{n-1}-y_{n}, ~ {x^{\star}}-y_{n} \rangle\nonumber\\
&\leq &\frac{1}{2}\|x_{n}-{x^{\star}}\|^2+\frac{\tau}{2}\big(\langle F (y_{n-1}), ~ {x^{\star}}-y_{n} \rangle+g({x^{\star}})-g(y_{n})\big)\nonumber\\
&=&\frac{1}{2}\|x_{n}-{x^{\star}}\|^2+\frac{\tau}{2}\langle F (y_{n-1})-F (y_n), ~ {x^{\star}}-y_{n} \rangle\nonumber\\
&&+\frac{\tau}{2}\big(\langle F(y_n), ~ {x^{\star}}-y_{n} \rangle+g({x^{\star}})-g(y_{n})\big)\nonumber\\
&\leq&\frac{1}{2}\|x_{n}-{x^{\star}}\|^2+\frac{\tau}{2}\langle F(y_{n-1})-F(y_n), ~ {x^{\star}}-y_{n} \rangle,
\ee
where the first inequality follows \eqref{eq:4-GRA} at the $n$-th iteration, and the last follows $\langle F(y_n), ~ {x^{\star}}-y_{n} \rangle+g({x^{\star}})-g(y_{n})\leq -\Psi(x^\star, y_n)\leq0$ due to $x^\star\in \cS$.

Using $\tau L_F<1$, i.e., $\tau=\frac{\zeta}{L_F}$ for some $\zeta \in (0,1)$, gives
\be\label{est-2-GRA}
\tau\langle F(y_{n-1})-F(y_n), ~ {x^{\star}}-y_{n} \rangle\leq \zeta \Big(\frac{1}{2}\|y_{n}-y_{n-1}\|^2 +  {1\over 2} \|y_{n}-x^\star\|^2\Big).
\ee
Substituting \eqref{est-2-GRA} into \eqref{est-1-GRA} and multiplying by $2$, we obtain
\ben
\|y_{n}-x_{n}\|^2+\|{x^{\star}}-y_{n}\|^2
&\leq&\|x_{n}-{x^{\star}}\|^2+\zeta \Big(\frac{1}{2}\|y_{n}-y_{n-1}\|^2 +  {1\over 2} \|y_{n}-x^\star\|^2\Big).
\een
Noticing $\zeta \in(0,1)$ and the definition of $E_{n}(x)$ gives
\ben
E_{n}(x^\star)&=&2\|x_{n}-{x^{\star}}\|^2+ \zeta \| y_{n}-y_{n-1}\|^2-2\|x_{n}-y_n\|^2\\
&\geq&(2-\zeta)\|y_{n}-x^\star\|^2>\|y_{n}-x^\star\|^2\geq0.
\een
\endproof

\subsection{Convergence results}
Based on Lemmas \ref{lem1-GRA} and \ref{lem2-GRA} for the proposed aPRG, we next establish  global iterate convergence and  ergodic sublinear convergence rate results.

\begin{theorem}\label{th:graal}
Let $\{x_n\}$ be generated by aPRG (Algorithm \ref{aPRG}), then it converges to a solution of the MVI problem \eqref{mVI}.
\end{theorem}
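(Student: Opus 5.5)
We follow the standard Fejér-type scheme driven by the Lyapunov function $E_n$. Fix $x^\star\in\cS$ and apply Lemma \ref{lem1-GRA} with $x=x^\star$. Since $\Psi(x^\star,y_n)\ge 0$, we get $E_{n+1}(x^\star)\le E_n(x^\star)-R_n$ with $R_n=2(1-\zeta)\|y_{n+1}-y_n\|^2\ge0$. Lemma \ref{lem2-GRA} gives $E_n(x^\star)\ge 0$, so item (i) of Fact \ref{fact_ab} yields three conclusions: $\lim_n E_n(x^\star)$ exists, $\sum_n\|y_{n+1}-y_n\|^2<\infty$, and $\|y_{n+1}-y_n\|\to0$. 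Since $x_{-1}=x_0$, we have $y_0=x_0$, so Lemma \ref{lem-ccstep} applies and gives $\|x_n-y_n\|\to0$. Consequently, in $E_n(x^\star)$ the last two terms tend to zero. Hence $\lim_n\|x_n-x^\star\|$ exists for every $x^\star\in\cS$, and by Lemma \ref{lem2-GRA} the sequences $\{y_n\}$ and $\{x_n\}$ are bounded.

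Next we show that every cluster point lies in $\cS$. Let $x_{n_k}\to\bar x$; then $y_{n_k}\to\bar x$ and $y_{n_k+1}\to\bar x$, because $\|y_{n+1}-y_n\|\to0$. Apply \eqref{eq:4-GRA} at index $n_k$:
\[
\tau\bigl(g(y_{n_k+1})-g(x)\bigr)\le \langle y_{n_k+1}-x_{n_k}+\tau F(y_{n_k}),\, x-y_{n_k+1}\rangle\quad\forall x.
\]
The term $y_{n_k+1}-x_{n_k}$ tends to zero, and $F(y_{n_k})\to F(\bar x)$ by continuity of $F$ on the bounded set. For this we need $\bar x\in\dom g$; this holds because the $y_n$ lie in $\dom g$ and, by lower semicontinuity, the inequality with a fixed $x\in\dom g$ keeps $g(y_{n_k+1})$ bounded above. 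Taking $\liminf$ and using lsc of $g$ gives $g(\bar x)-g(x)\le\langle F(\bar x),x-\bar x\rangle$ for all $x$. This is exactly \eqref{mVI}, so $\bar x\in\cS$. In the locally Lipschitz case, boundedness of the iterates lets us use a single constant $L_F$ on a bounded set containing them; the fixed step analysis above already presumes this.

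Finally, an Opial-type argument in $\R^q$ finishes the proof. Suppose $\bar x,\hat x\in\cS$ are two cluster points of $\{x_n\}$. Since both limits $\lim\|x_n-\bar x\|$ and $\lim\|x_n-\hat x\|$ exist, the limit of $\|x_n-\bar x\|^2-\|x_n-\hat x\|^2=2\langle x_n,\hat x-\bar x\rangle+\|\bar x\|^2-\|\hat x\|^2$ exists. Evaluating this limit along the two subsequences gives $\|\bar x-\hat x\|^2=0$. Therefore $x_n\to\bar x\in\cS$, and also $y_n\to\bar x$.

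The main obstacle is the bookkeeping that turns the existence of $\lim E_n(x^\star)$ into the existence of $\lim\|x_n-x^\star\|$, because $E_n$ contains the negative term $-2\|x_n-y_n\|^2$. This is exactly where Lemma \ref{lem-ccstep}, together with the summability from Fact \ref{fact_ab}, is needed. The limit passage in the prox inequality, which needs lsc of $g$ and continuity of $F$, is routine by comparison.
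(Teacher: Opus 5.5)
Your proof is correct and follows the paper's route. Both use the descent of $E_n(x^\star)$ from Lemma \ref{lem1-GRA} (with nonnegativity from Lemma \ref{lem2-GRA}), summability of $\|y_{n+1}-y_n\|^2$, Lemma \ref{lem-ccstep} to get $\|x_n-y_n\|\to0$, and a $\liminf$ passage in the prox inequality to place cluster points in $\cS$; you also check $\bar x\in\dom g$, which the paper skips. The only difference is the last step, where the paper uses that $E_n(x^*)$ is nonincreasing with $E_{n_k}(x^*)\to0$ and $E_n(x^*)\ge(2-\zeta)\|y_n-x^*\|^2$ instead of an Opial argument; in $\R^q$ your version also shortens, since $\lim_n\|x_n-\bar x\|$ exists and vanishes along $\{n_k\}$.
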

\proof
Let $x^\star \in \cS$ be arbitrary. From \eqref{anbn-relation-GRA}, it holds
\ben
2 \tau \Psi(x^\star,y_n)+ E_{n+1}(x^\star) \leq E_n(x^\star)-R_n.
\een
The item (i) of Fact \ref{fact_ab} implies that $\lim_{n\rightarrow\infty}E_{n}(x^{\star})$ exists, $\lim_{n\rightarrow\infty} R_n = 0$ and $\sum_{n=1}^{\infty}R_n < \infty$. By the definition of $R_n$ in \eqref{Rn-PRG}, we have
\be\label{three-lim=0}
\lim\limits_{n\rightarrow\infty}\|y_{n+1}-y_n\|=0.
\ee
and $\sum_{n=1}^{\infty}\|y_{n+1}-y_n\|^2<\infty$. This together with Lemma \ref{lem-ccstep} gives $\lim_{n\rightarrow \infty}\|x_{n}-y_n\|=0$. Referring to Lemma \ref{lem1-GRA} and the definition of $E_{n+1}(x)$, the sequence $\{x_{n}\}$ is bounded, then $\{y_{n}\}$ is bounded as well.

Thus, let $x^*\in\R^q$ be a cluster point of $\{y_n\}$. Then, there exists a
subsequence $\{y_{n_k}\}_{k\in\N}$ such that $y_{n_k}\rightarrow x^*$, $y_{n_k-1}\rightarrow x^*$ and $x_{n_k}\rightarrow x^*$ as $k\rightarrow\infty$. Now, recalling (\ref{eq:4-GRA}) and $y_{n}-x_{n-1}=2(y_{n}-x_{n})$ gives
\ben
\langle 2(y_{n_k}-x_{n_k})+\tau F(y_{n_k-1}), x- y_{n_k}\rangle\geq \tau
    (g(y_{n_k})-g(x)), ~~\forall x\in \R^q,
\een
and taking the limit-infimum of both sides as $k\rightarrow\infty$ shows that $x^*\in\cS$.

Since $x^\star\in\cS$ was chosen in Lemma \ref{lem2-GRA} to be arbitrary, we can now set $x^\star=x^*$. It then follows that $\lim_{k\rightarrow \infty} \|y_{n_k}-x^*\| = 0$, and consequently, $\lim_{k\rightarrow \infty}E_{n_k}(x^*)= 0$. Furthermore, it is clear from \eqref{anbn-relation-GRA} that the whole sequence $\{E_n(x^*)\}$ is monotonically nonincreasing. Therefore, the whole sequence $\{E_n(x^*)\}$ must converge to $0$.
As a result, we have $\lim_{n\rightarrow\infty}y_{n} = x^*$.
By noting Lemma \ref{lem-ccstep} again, we obtain $\lim_{n\rightarrow\infty}x_n = x^*$.
\endproof

We next establish the ergodic sublinear convergence rate of aPRG using the restricted merit function first proposed in \cite{Nesterov2007Dual}.
In \cite{Malitsky2019Golden}, the restricted merit function for the MVI problem \eqref{mVI} is defined as
\ben
e_r(v)&:=&\max_{u\in U}\Psi(u,v),~~\forall v\in \R^q,
\een
where $U=\dom g\cap B[\bar{y}; r]$, $\bar{y}\in \dom g$ and $r > 0$ are selected such that $U$ contains at least one solution of \eqref{mVI}. From \cite[Lemma 3]{Malitsky2019Golden}, the function $e_r$ is well defined and convex on $\R^q$. For all $y\in U$, $e_r (y)\geq0$. If $x^\star\in U$ is a solution to (\ref{mVI}), then $e_r (x^\star) = 0$. Conversely, if $e_r (\hat{y}) = 0$ for some $\hat{y}$ with $\|\hat{y} -\bar{y}\|< r$, then $\hat{y}$ is a solution of (\ref{mVI}).

Using \eqref{anbn-relation-GRA}, the convexity of $\Psi(x,\cdot)$ and Jensen's inequality gives
\ben
\Psi(x,\hat{y}_N)\leq \frac{1}{N}\sum_{n=1}^N\Psi(x,y_n) &\leq& \frac{E_1(x)- E_{N+1}(x)}{2\tau N}\\
&\leq& \frac{E_1(x)+2\|x_{N+1}-y_{N+1}\|^2}{2\tau N}, ~~\forall y\in U,
\een
where $\hat{y}_N$ is the ergodic sequence ${\hat y}_N =\frac{1}{N}\sum_{n=1}^N y_n$. Recall that $2\|x_{N+1}-y_{N+1}\|^2=\frac{1}{2}\|x_{N}-y_{N+1}\|^2$ and $\lim_{n\rightarrow \infty}\|x_{n-1}-y_n\|=0$, $2\|x_{N+1}-y_{N+1}\|^2$ is upper bounded. Since $F$ is continuous and $g$ is lsc, there exist some constant $M > 0$ that majorizes $E_1(x)+2\|x_{N+1}-y_{N+1}\|^2$ for all $y\in U$.  Hence, we obtain
\ben
e_r(\hat{y}_N)= \max_{x\in U}\Psi(x,\hat{y}_N)\leq \frac{M}{2\tau N},
 \een
which implies the $O(1/N)$ convergence rate for the ergodic sequence $\{\hat{y}_N\}$.

\section{Adaptive step sizes for aPRG}
\label{sec_adaptive}

Under the fixed step size, aPRG requires knowledge of the global Lipschitz constants of $F$, which can be challenging to obtain in practice and poor estimates of these constants can significantly deteriorate the practical performance. Moreover, even with known Lipschitz constants, the stepsizes derived from the global Lipschitz constants is usually much overconservative since they fail to utilize local geometry, resulting slow practical convergence. Thus in this section, we introduce an adaptive strategy of step sizes for aPRG.

\subsection{Fully adaptive and closed-form step sizes}
As in \cite{Latafat2025Adaptive} for Lipschitz estimates, we define
\be\label{def:Ln}
L_n=\frac{\|F(y_n)-F(y_{n-1})\|}{\|y_n-y_{n-1}\|},
\ee
for the monotone operator $F$. It is easy to observe that, $L_n$ is a local Lipschitz estimate of $F$ for a pair of points $y_{n-1}$ and $y_n$, and $L_n\leq L_F$. Noting that the denominator of $L_n$ is zero iff $F(y_n)-F(y_{n-1})= 0$, we stick to the convention $\frac{0}{0}=0$ so that $L_n$ is (well-defined, positive) real numbers.

Using this notation of $L_n$, aPRG with adaptive step sizes is summarized in the following Algorithm \ref{aPRG-adp}.

\vskip5mm
\hrule\vskip2mm
\begin{algo}
[aPRG with adaptive step sizes (aPRG-adp)]\label{aPRG-adp}
{~}\vskip 1pt {\rm
\begin{description}
\item[{\em Require:}] stepsize parameter: $r\in(0,1]$ and $\zeta\in(0,1)$ (e.g., $\zeta=1-10^{-6}$);\\
initial point $x_0\in \R^q$, $y_{0} = x_0$, and initial stepsize $\tau_0>0$.\\
\item[{\em Initialize:}] $y_{1}=\prox_{\tau_0 g}\bigl(x_0 - \tau_0 F(y_0)\bigr)$, $\tau_{-1}=\tau_0$ and $n=1$.
\item[{\em Step 1.}]With $L_n$ as in \eqref{def:Ln}, estimate step size $\tau_n$ by
\be\label{est-step}
\tau_{n}=\frac{3(4-r)\tau_{n-1}}{8+3\tau_{n-1}\tau_{n-2} L_{n}^2/(r\zeta)}.
\ee
\item[{\em Step 2.}] Compute
\ben
y_{n+1}=\prox_{\tau_n g}\bigl(x_n - \tau_n F(y_n)\bigr)~~\mbox{and}~~x_{n+1} = \frac{1}{2}x_n + \frac{1}{2} y_{n+1}.
\een
\item[{\em Step 3.}] Set $n\leftarrow n + 1$ and return to Step 1. 
  \end{description}
}
\end{algo}
\vskip1mm\hrule\vskip5mm

Remarkably, aPRG-adp adopts a closed-form step size, which differs entirely from the existing step-size rules for gradient-based algorithms, e.g., \cite{Malitsky2019Golden,Latafat2025Adaptive}, where the minimum of two terms is adopted. While the expression \eqref{est-step} of $\tau_n$ looks complicated, it merely consists of real-valued arithmetic operations. Define $\rho:=\frac{3(4-r)}{8}$. It is easy to observe from the rule \eqref{est-step} that, if $L_n=0$ it holds
\ben
\tau_{n}=\frac{3(4-r)}{8}\tau_{n-1}=\rho\tau_{n-1}.
\een
This gives $\delta_n:=\frac{\tau_{n}}{\tau_{n-1}}<\rho$ from the updating \eqref{est-step} for the case of $L_n>0$, hence the growth rate of step sizes is upper bounded by $\rho$. The introduction of constant $\zeta<1$ enforced in aPRG-adp is for establishing sufficient descent of the energy function $\widetilde{E}_n(x)$, see the definition of $\widetilde{R}_n$ in Lemma \ref{lem-ls}.

\begin{rem}
We now provide several remarks on aPRG-adp.
\bi
\item[(i)] \textbf{(Parameter $r$)}. It should be emphasized that aPRG-adp contains only one unique parameter $r$, whose value directly governs the step size generation mechanism.
    Recall $r\in(0,1]$, the corresponding upper bound of $\rho$ lies in the interval $[9/8, 3/2]$. It can be readily seen that a smaller value of $r$ yields a larger growth factor $\rho$, which can improve numerical performance, however taking $r$ too small may prevent adopting large step sizes in \eqref{est-step}.
\item[(ii)] \textbf{(Specific rule)}. The rule in \eqref{est-step} is derived from condition \eqref{results-step}, i.e., $\tau_n^2L_n^2= \zeta r  (4-r-\frac{8}{3}\delta_n)\delta_n \delta_{n-1}$, which is constructed to derive the conclusion of Lemma \ref{lem-ls}. Let $\widetilde{\rho}\in(1,\rho)$ and $\delta_n\leq \widetilde{\rho}$, the condition  \eqref{results-step} can be replaced by $\tau_n^2L_n^2\leq \zeta r  (4-r-\frac{8}{3}\widetilde{\rho})\delta_n \delta_{n-1}$ (tighter than condition \eqref{results-step}), then a rule
    \ben
    \tau_n=\left\{\widetilde{\rho}\tau_{n-1}, ~~\frac{\zeta r  (4-r-\frac{8}{3}\widetilde{\rho})}{\tau_{n-2}L_n^2}\right\}
    \een
    can be used, which adopted the minimum of two terms and is similar with that used in \cite[Algorithm 1]{Malitsky2019Golden}. Numerical experiments demonstrate that, larger $\widetilde{\rho}$ may result in better performance of aPRG-adp. Accordingly, we may adopt a relatively large value of $\widetilde{\rho}$, such as $\widetilde{\rho}=\frac{11}{8}$, to improve numerical performance. Meanwhile, we set $r=\frac{1}{6}$ so that $r(4-r-\frac{8}{3}\widetilde{\rho})$ with $\widetilde{\rho}=\frac{11}{8}$ attains its maximum value $\frac{1}{36}$. Under this setting, step size can be given by specific rule
\be\label{step-special}
\tau_{n}=\min\left\{\frac{11}{8}\tau_{n-1}, ~~\frac{\zeta}{36\tau_{n-2}L_n^2}\right\}.
\ee
We observe that the update rule \eqref{est-step} outperforms scheme \eqref{step-special} and consistently achieves favorable numerical performance across all test problems; detailed numerical comparisons are provided in Section \ref{sec-experiments}.
\ei
\end{rem}

\begin{lem}\label{lem-ls}
Let $\{(x_n, y_n)\}$ be generated by aPRG-adp (Algorithm \ref{aPRG-adp}). For any $x\in\R^q$, we have
\[\label{anbn-relation-GRA-adp}
2 \tau_n \Psi(x,y_n)+ \widetilde{E}_{n+1}(x) \leq \widetilde{E}_n(x)-\widetilde{R}_n,
\]
where
\be\label{Rn-PRG-adp}
\left\{\ba{rcl}\widetilde{E}_n(x)&:=& 2\|x_{n+1}-x\|^2+ r\delta_{n-1}\|y_n-y_{n-1}\|^2, \\
\widetilde{R}_n &:=&(1-\zeta)r\delta_{n-1}\|y_{n}-y_{n-1}\|^2.
\ea\right.\ee
\end{lem}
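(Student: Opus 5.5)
The plan is to repeat the proof of Lemma \ref{lem1-GRA}, now with varying step sizes. The ratio $\delta_n=\tau_n/\tau_{n-1}$ and the step-size identity behind \eqref{est-step} will take the place of the fixed bound $\tau L_F=\zeta$. That identity is
\[
\tau_n^2L_n^2=\zeta r\bigl(4-r-\tfrac83\delta_n\bigr)\delta_n\delta_{n-1},
\]
which can be checked directly from \eqref{est-step} by solving for $\tau_n$.

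\textbf{Step 1: combine the two prox inequalities.} By Fact \ref{fact_proj}, the update $y_{n+1}=\prox_{\tau_n g}(x_n-\tau_nF(y_n))$ gives, for all $x$,
\[
\langle y_{n+1}-x_n+\tau_nF(y_n),x-y_{n+1}\rangle\ge\tau_n\bigl(g(y_{n+1})-g(x)\bigr).
\]
The update at step $n-1$, tested at $y_{n+1}$, gives
\[
\langle y_n-x_{n-1}+\tau_{n-1}F(y_{n-1}),y_{n+1}-y_n\rangle\ge\tau_{n-1}\bigl(g(y_n)-g(y_{n+1})\bigr).
\]
I will multiply the second inequality by $\delta_n$ and add it to the first. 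The $g(y_{n+1})$ terms cancel. I will also use $y_n-x_{n-1}=2(y_n-x_n)$ and monotonicity of $F$, exactly as in \eqref{ineq-11}. This yields
\begin{multline*}
\langle y_{n+1}-x_n,x-y_{n+1}\rangle+2\delta_n\langle y_n-x_n,y_{n+1}-y_n\rangle\\
+\tau_n\langle F(y_n)-F(y_{n-1}),y_n-y_{n+1}\rangle\ge\tau_n\Psi(x,y_n).
\end{multline*}

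\textbf{Step 2: convert inner products to norms.} I will apply \eqref{id} to the first two inner products. Then I will rewrite every $y$-quantity through the averaging relations $x_{n+1}=\tfrac12(x_n+y_{n+1})$ and $x_n=\tfrac12(x_{n-1}+y_n)$. In particular I will use \eqref{id2} in the form
\[
\|y_{n+1}-x\|^2=2\|x_{n+1}-x\|^2-\|x_n-x\|^2+\tfrac12\|y_{n+1}-x_n\|^2.
\]
Because $\widetilde E_n$ is built on $\|x_{n+1}-x\|^2$ rather than $\|x_n-x\|^2$, the index bookkeeping must be redone. The target is a telescoping inequality of the form
\[
2\|x_{n+2}-x\|^2\le 2\|x_{n+1}-x\|^2+\dots
\]
It may be more convenient to shift $n$ by one, or to express $x_{n+1}$ via $y_{n+1}$ and $x_n$. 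I will choose whichever makes the distance terms telescope exactly.

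Once the distances telescope, the leftover squared terms are of three kinds: $\|y_{n+1}-y_n\|^2$, $\|y_n-x_n\|^2$ (equivalently $\|y_n-x_{n-1}\|^2$), and $\|y_{n+1}-x_n\|^2$. Their coefficients depend on $\delta_n$. Unlike the fixed-step case, the term $-2\delta_n\|y_n-x_n\|^2$ no longer cancels exactly. Wherever leftover terms of mixed sign appear, I will merge them using Fact \ref{fact_uv}, applied to the decomposition
\[
y_{n+1}-x_n=(y_{n+1}-y_n)+(y_n-x_n).
\]
I expect the constant $\tfrac83$ in the step-size identity to come from the weights $uv/(u+v)$ in that merge.

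\textbf{Step 3: the Lipschitz term.} I will bound the cross term by Cauchy--Schwarz and the definition \eqref{def:Ln} of $L_n$:
\[
2\tau_n\langle F(y_n)-F(y_{n-1}),y_n-y_{n+1}\rangle\le 2\tau_nL_n\|y_n-y_{n-1}\|\,\|y_{n+1}-y_n\|.
\]
I then apply Young's inequality with weight $\zeta r\delta_{n-1}$ on $\|y_n-y_{n-1}\|^2$. The other side then costs $\tau_n^2L_n^2/(\zeta r\delta_{n-1})\,\|y_{n+1}-y_n\|^2$, which by the step-size identity equals $(4-r-\tfrac83\delta_n)\delta_n\|y_{n+1}-y_n\|^2$. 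After this substitution, the $\|y_n-y_{n-1}\|^2$ terms collect to $r\delta_{n-1}-\zeta r\delta_{n-1}=\widetilde R_n$, which produces the descent term. On the $\|y_{n+1}-y_n\|^2$ side, what must remain is at least $r\delta_n\|y_{n+1}-y_n\|^2$, which is the new term in $\widetilde E_{n+1}$.

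\textbf{Main obstacle.} The hard step is Step 2 combined with the end of Step 3. I need to verify that the negative coefficient produced by the identities, after the Fact \ref{fact_uv} merge, is exactly
\[
-\Bigl(4-\tfrac83\delta_n\Bigr)\delta_n\|y_{n+1}-y_n\|^2,
\]
so that adding the Young cost $(4-r-\tfrac83\delta_n)\delta_n$ leaves precisely $-r\delta_n$, and that all other residual terms are nonpositive. My first attempt will be to choose the Fact \ref{fact_uv} weights $u,v$ as affine functions of $\delta_n$, and to solve for them so that this coefficient matching holds.
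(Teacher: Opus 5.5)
Your Steps 1 and 3, and the Fact~\ref{fact_uv} merge in Step 2, follow the paper's argument. The exception is which vector you decompose in the merge, discussed at the end. The shared ingredients are:
the $\delta_n$-weighted sum of the two prox inequalities;
the identity $\|y_{n+1}-x\|^2=2\|x_{n+1}-x\|^2-\|x_n-x\|^2+\tfrac{1}{2}\|y_{n+1}-x_n\|^2$;
a merge leaving $-(4-\tfrac{8}{3}\delta_n)\delta_n\|y_{n+1}-y_n\|^2$;
Young's inequality closed by $\tau_n^2L_n^2=\zeta r(4-r-\tfrac{8}{3}\delta_n)\delta_n\delta_{n-1}$.

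What fails is the telescoping target you set, $2\|x_{n+2}-x\|^2\le 2\|x_{n+1}-x\|^2+\cdots$. Read literally, with $\widetilde E_n(x)=2\|x_{n+1}-x\|^2+\cdots$, the inequality fails for suitable $x$, so no bookkeeping can produce it.

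Take $g=0$ and $F(x)=Mx$ with $M$ the skew matrix of Section~\ref{sec:Examples}. Then $x_{n+2}-x_{n+1}=-\tfrac{\tau_{n+1}}{2}My_{n+1}$ and $\Psi(x,y_n)=-\langle x,My_n\rangle$. The $x$-dependent part of (left side minus right side) is therefore $2\langle x,M(\tau_{n+1}y_{n+1}-\tau_n y_n)\rangle$. This is unbounded above unless $\tau_{n+1}y_{n+1}=\tau_n y_n$. Shifting $n\mapsto n+1$ does not help, because it also turns $\tau_n\Psi(x,y_n)$ into $\tau_{n+1}\Psi(x,y_{n+1})$. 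The term $\Psi(x,y_n)$ comes from the step $x_n\mapsto x_{n+1}$ and can only pair with $\|x_n-x\|^2\to\|x_{n+1}-x\|^2$.

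The $x_{n+1}$ in the definition is a typo. The paper's proof actually uses $\widetilde E_n(x)=2\|x_n-x\|^2+r\delta_{n-1}\|y_n-y_{n-1}\|^2$; this is the only reading under which \eqref{bound-E} follows from \eqref{ineq1-adp}. With this definition:
\begin{itemize}
\item your identity telescopes exactly, $2\|x_{n+1}-x\|^2$ on the left against $2\|x_n-x\|^2$ on the right, with no reindexing;
\item the $\|y_{n+1}-y_n\|^2$ coefficient $-(4-\tfrac{8}{3}\delta_n)\delta_n+(4-r-\tfrac{8}{3}\delta_n)\delta_n=-r\delta_n$ supplies exactly the new term of $\widetilde E_{n+1}$;
\item the $\|y_n-y_{n-1}\|^2$ terms give $\widetilde R_n$, and the proof closes as you describe.
\end{itemize}
Your uniform Cauchy--Schwarz/Young step also covers $L_n=0$ without the paper's case split.

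One caution on the merge. Your orientation $y_{n+1}-x_n=(y_{n+1}-y_n)+(y_n-x_n)$ needs weights $\tfrac{8}{3}\delta_n^2-2\delta_n$ on $\|y_{n+1}-y_n\|^2$ and $2\delta_n$ on $\|y_n-x_n\|^2$. These are not affine in $\delta_n$. The paper decomposes $y_{n+1}-y_n$ instead, with $u=2\delta_n$ and $v=\tfrac{3}{2}-2\delta_n$. Both express the same inequality, but in either orientation one weight is negative on part of the range $\delta_n\in(0,\rho]$. So apply Fact~\ref{fact_uv} in vector form: for $u+v>0$, $u\|A\|^2+v\|B\|^2-\tfrac{uv}{u+v}\|A+B\|^2=\|uA-vB\|^2/(u+v)$. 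The scalar version combined with the triangle inequality goes the wrong way when $uv/(u+v)<0$.
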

\begin{proof}
By Fact~\ref{fact_proj} and $y_{n+1}=\prox_{\tau_n g}\bigl(x_n - \tau_n F(y_n)\bigr)$, we have
\be\label{eq:4-GRA-adp}
\langle y_{n+1}-x_{n}+\tau_n F(y_{n}), x - y_{n+1}\rangle\geq \tau_n
    (g(y_{n+1})-g(x)) \quad \forall x\in \R^q,
\ee
and similarly
\be\label{eq:5-GRA-adp}
\langle y_{n}-x_{n-1}+\tau_{n-1} F(y_{n-1}), y_{n+1} - y_{n}\rangle\geq \tau_{n-1}
    (g(y_{n})-g(y_{n+1})).
\ee
Note that $y_{n}-x_{n-1}=2(y_n-x_{n})$ and $\delta_n=\frac{\tau_{n}}{\tau_{n-1}}$, we can rewrite \eqref{eq:5-GRA-adp} as
\be\label{eq:6-GRA-adp}
\langle 2\delta_n(y_n-x_{n})+\tau_{n} F(y_{n-1}), y_{n+1} - y_{n}\rangle\geq \tau_{n}
    (g(y_{n})-g(y_{n+1})).
\ee
Summation of \eqref{eq:4-GRA-adp} and \eqref{eq:6-GRA-adp} yields
\be\label{ineq-11-adp}
\langle y_{n+1}-x_{n}, x - y_{n+1}\rangle&+&\langle 2\delta_n(y_n-x_{n}), y_{n+1} - y_{n}\rangle +\tau_n \langle F(y_{n})-F(y_{n-1}), y_{n}-y_{n+1}\rangle\nonumber \\
&\geq& \tau_n \langle F(y_{n}), y_{n} -x \rangle+\tau_n
    (g(y_{n})-g(x))\nonumber \\
&\geq& \tau_n \langle F(x), y_{n} -x \rangle+\tau_n
    (g(y_{n})-g(x))=\tau_n \Psi(x,y_n),
    \ee
where the last inequality follows the monotonicity of $F$.

Now, by applying (\ref{id}) to the first two inner products on the left-hand-side of \eqref{ineq-11-adp}, multiplying both sides by a factor $2$ and reorganizing the terms, we derive
\be\label{ineq11-adp}
\|y_{n+1}-x\|^2 + 2 \tau_n \Psi(x,y_n)
& \leq & \|x_{n}-x\|^2  + 2\tau_n \langle Fy_{n}- Fy_{n-1},  y_n-y_{n+1}\rangle
-2\delta_n\|x_{n}-y_n\|^2 \nonumber \\
&  & + (2\delta_n-1)\|y_{n+1}-x_{n}\|^2   -2\delta_n\|y_{n+1}-y_n\|^2.
\ee
By plugging \eqref{x-to-z} and \eqref{x-to-z2} into (\ref{ineq11-adp}), we obtain
\be\label{ineq1-adp}
2\|x_{n+1}-x\|^2 + 2 \tau_n \Psi(x,y_n)
& \leq & 2\|x_{n}-x\|^2  + 2\tau_n \langle Fy_{n}- Fy_{n-1},  y_n-y_{n+1}\rangle
-2\delta_n\|x_{n}-y_n\|^2 \nonumber \\
&  & + (2\delta_n-\frac{3}{2})\|y_{n+1}-x_{n}\|^2   -2\delta_n\|y_{n+1}-y_n\|^2.
\ee
Using Fact \ref{fact_uv} with $u=2\delta_n>0$ and $v=\frac{3}{2}-2\delta_n$ so that $u+v = \frac{3}{2} >0$,
$a=\|x_{n}-y_n\|$, $b=\|x_{n}-y_{n+1}\|$ and noting   $\|y_{n+1}-y_{n}\|\leq  a+b$, we can easily obtain
\be\label{ineq-relax}
2\delta_n\|x_{n}-y_n\|^2+(\frac{3}{2}-2\delta_n) \|y_{n+1}-x_{n}\|^2
\geq2\delta_n(1-\frac{4\delta_n}{3}) \|y_{n+1}-y_n\|^2.
\ee
Recall the definition $\widetilde{E}_n(x)$, from \eqref{ineq1-adp} and \eqref{ineq-relax} we can deduce
\be\label{bound-E}
\widetilde{E}_{n+1}(x) + 2 \tau_n \Psi(x,y_n)
& \leq & \widetilde{E}_n(x)  + 2\tau_n \langle Fy_{n}- Fy_{n-1},  y_n-y_{n+1}\rangle
-r\delta_{n-1}\|y_{n}-y_{n-1}\|^2 \nonumber \\
&  & - (4-r-\frac{8}{3}\delta_n)\delta_n\|y_{n+1}-y_n\|^2.
\ee
For the case of $L_n=0$, i.e., $Fy_{n}- Fy_{n-1}=0$, it follows from  \eqref{est-step} that $4-r-\frac{8}{3}\delta_n=0$ and then the result \eqref{anbn-relation-GRA-adp} holds due to $r\delta_{n-1}\|y_{n}-y_{n-1}\|^2> \widetilde{R}_n$ defined in \eqref{Rn-PRG-adp}.

Now, we check the case of $L_n>0$. From $\tau_n< \rho \tau_{n-1}$ and $\rho=\frac{3(4-r)}{8}$, we have $\delta_n< \frac{3(4-r)}{8}$ and then $4-r-\frac{8}{3}\delta_n>0$. By  \eqref{est-step}, it holds
\ben
\tau_n(\tau_{n-1}^2L_n^2+\frac{8}{3}\zeta r\delta_{n-1}) =  r  (4-r)\zeta\tau_{n-1} \delta_{n-1}.
\een
That is,
\be\label{results-step}
\tau_n^2L_n^2= \zeta r  (4-r-\frac{8}{3}\delta_n)\delta_n \delta_{n-1}.
\ee
We bound the term $2\tau_n \langle Fy_{n}- Fy_{n-1},  y_n-y_{n+1}\rangle$ by Young's inequality with parameter $\varepsilon_n>0$ as
\ben
2\tau_n \langle Fy_{n}- Fy_{n-1},  y_n-y_{n+1}\rangle\leq \varepsilon_n\tau_n^2 \| Fy_{n}- Fy_{n-1}\|^2+\frac{1}{\varepsilon_n}\|y_n-y_{n+1}\|^2.
\een
Selecting $\varepsilon_n=\frac{1}{(4-r-\frac{8}{3}\delta_n)\delta_n}>0$ and using \eqref{results-step}, we have
\ben
2\tau_n \langle Fy_{n}- Fy_{n-1},  y_n-y_{n+1}\rangle\leq \zeta r\delta_{n-1} \|y_{n}- y_{n-1}\|^2+(4-r-\frac{8}{3}\delta_n)\delta_n\|y_n-y_{n+1}\|^2,
\een
which together with \eqref{bound-E} gives
\ben
\widetilde{E}_{n+1}(x) + 2 \tau_n \Psi(x,y_n)
& \leq & \widetilde{E}_n(x)  -(1-\zeta)r\delta_{n-1}\|y_{n}-y_{n-1}\|^2.
\een
Combining the two cases $L_n=0$ and $L_n>0$, we complete the proof.
\end{proof}

We next establish some important properties of step size sequence $\{\tau_n\}$ generated by aPRG-adp. For simplicity to prove, we rewrite the rule \eqref{est-step} as
\ben
\tau_n=\left\{\ba{cl}\rho\tau_{n-1},& \mbox{if}~L_n=0,\\
 s_n,&\mbox{if}~L_n>0,
\ea\right.
\een
where $s_n:=\frac{3(4-r) \tau_{n-1}}{8+3\tau_{n-1}\tau_{n-2} L_{n}^2/(r\zeta)}$. Following \cite{Beyond2023}, we call $\tau_n = \rho \tau_{n-1}$ as the first option, $\tau_n = s_n$ as the second option, and make following two claims:
\bi
\item \textbf{Claim 1.} For any $n \geq 1$, $\tau_n$ satisfies $\tau_n^2L_n^2= \zeta r  (4-r-\frac{8}{3}\delta_n)\delta_n \delta_{n-1}$. When $L_n>0$ (the second option), we have $\delta_n<\rho$ and $\tau_{n-2} + \tau_n \geq 2\sqrt{\tau_{n-2}\tau_n} =2\sqrt{\frac{\tau_{n-2}}{\tau_{n-1}}\tau_{n-1}\tau_n}=2\sqrt{\frac{1}{\delta_{n-1}}\frac{\tau_{n-1}}{\tau_n}}\tau_n=\frac{2\sqrt{\zeta r  (4-r-\frac{8}{3}\delta_n)}}{L_n} \geq \frac{c_1}{L_F}$ for some $c_1>0$.
\item \textbf{Claim 2.} Let $\underline{\tau}:=\frac{c_1}{L_F}$ and choose $\tau_{-1}=\tau_0 \geq \underline{\tau}/2$.
\ei
In addition, it establishes some important properties on $\{\tau_n\}$ and
$\{ \delta_n\}$, which are essential for establishing the convergence
results.

\begin{lem}\label{lem_bound}
The following claims hold.
\bi
\item[(i)] For any integer $N > 0$, we have $\sum_{i=0}^{N} \tau_i \geq C(N+ 1)$ with $C=\underline{\tau}/2>0$;
\item[(ii)] For any $\varpi \in (0, 1)$, there exists an infinite subsequence $\{ n_j : j \geq 1\} \subseteq \{1, 2, \cdots\}$ such that $\delta_{n_j} \geq \varpi$ and $\tau_{n_j} \geq C$.
\ei
\end{lem}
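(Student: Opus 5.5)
The plan is to argue entirely from the two alternatives in the step-size rule \eqref{est-step}. Under the first option ($L_n=0$) we have $\tau_n=\rho\tau_{n-1}>\tau_{n-1}$. Under the second option ($L_n>0$) Claim~1 gives $\tau_{n-2}+\tau_n\ge \underline{\tau}=2C$. Claim~2 supplies the base case $\tau_{-1}=\tau_0\ge C$.

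For (i), I would use a charging argument that allows a small step $\tau_n<C$ only if some neighbouring step is large enough to pay for it.

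\begin{itemize}
\item Fix $n\ge1$ with $\tau_n<C$, and let $k\le n$ be the last index $\le n$ at which the second option was used.
\item If no such $k\ge1$ exists, then $\tau_n\ge \tau_0\ge C$, so this case cannot occur.
\item Otherwise $\tau_k\le\tau_n<C$, because first-option steps only increase the step size. Claim~1 then gives $\tau_{k-2}\ge 2C-\tau_k>C$, with surplus $\tau_{k-2}-C\ge C-\tau_k\ge C-\tau_n$.
\end{itemize}

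So the deficit $C-\tau_n$ can be paid for by the surplus at index $k-2$. The real work is to make this charging injective, or at most two-to-one with the constant absorbed. Concretely, I would show that a large index $k-2$ is charged only by the maximal run of first-option indices following $k$. I would also check that a deficit $C-\tau_n$ never exceeds the available surplus $\tau_{k-2}-C$. This holds because along such a run $\tau_n\ge\tau_k$, so the deficit decreases.

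To avoid double counting cleanly, my first attempt would be a strong induction on $N$ with the strengthened invariant
\[
\sum_{i=0}^{N}\tau_i\ \ge\ C(N+1)+\max\{\tau_{N-1}-C,\ \tau_N-C,\ 0\}-\max\{C-\tau_N,0\}.
\]
The idea is to carry the unspent surplus of the last two steps, which is what the skip-one structure $\tau_{n-2}+\tau_n$ of Claim~1 requires. In the induction step I would split into the first and second option at $N+1$. I expect this bookkeeping to be the main obstacle, namely getting an invariant for which both option cases close. If the exact form above fails, I would instead sum Claim~1 over all second-option indices and handle the first-option indices through monotonicity along runs.

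For (ii), I would argue by contradiction. Suppose that for some $\varpi\in(0,1)$ only finitely many $n$ satisfy both $\delta_n\ge\varpi$ and $\tau_n\ge C$. Then there is $N_0$ such that for all $n\ge N_0$, $\tau_n\ge C$ implies $\tau_n<\varpi\tau_{n-1}$.

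\begin{itemize}
\item The sequence can never move upward from below $C$ to at least $C$ after $N_0$. Such a crossing would need $\tau_n\ge C$ together with $\delta_n>1>\varpi$.
\item Hence either $\tau_n\ge C$ for all $n\ge N_0$, in which case $\tau_n$ decays geometrically with ratio $\varpi$, which is impossible while staying $\ge C$; or $\tau_n<C$ for all $n\ge N_1$.
\item In the second case, any first-option step would eventually push $\tau_n$ above $C$, since first-option runs multiply by $\rho>1$. So from some index on, almost every step is a second-option step.
\item For a second-option step, Claim~1 gives $\tau_{n-2}\ge 2C-\tau_n>C$, contradicting $\tau_{n-2}<C$.
\end{itemize}

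This yields infinitely many $n_j$ with $\delta_{n_j}\ge\varpi$ and $\tau_{n_j}\ge C$.

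The delicate point in (ii) is the case where first-option runs keep restarting but never cross $C$. I would exclude it using the same local charging estimate from (i): every second-option index $k$ with $\tau_k<C$ forces $\tau_{k-2}>C$. Once the tail stays below $C$, there can therefore be no second-option steps beyond $N_1+2$. Then $\tau_n=\rho^{\,n-N_1-2}\tau_{N_1+2}\to\infty$, which is a contradiction.
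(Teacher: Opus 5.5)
\textbf{Gap in part (i).} Your charging scheme, the proposed invariant, and the fallback of summing Claim~1 over second-option indices all use only the additive consequence $\tau_{k-2}+\tau_k\ge 2C$ of Claim~1. From that, together with $\tau_n=\rho\tau_{n-1}$ on first-option steps and Claim~2, statement (i) does not follow.

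The per-index comparison $C-\tau_n\le\tau_{k-2}-C$ is not the real issue. Every index of the first-option run after a second-option index $k$ charges the same index $k-2$. That run can have length about $\log_\rho(C/\tau_k)$, with most deficits close to $C$. The additive bound only guarantees the surplus $\tau_{k-2}-C\ge C-\tau_k$, a quantity smaller than $C$.

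Concretely, take $\tau_{-1}=\tau_0=2C$ and $\tau_1=\epsilon$. This is an admissible second-option step, since $\tau_{-1}+\tau_1\ge 2C$ and $\tau_1<\rho\tau_0$. Then set $\tau_n=\rho^{n-1}\epsilon$ for $n\ge 2$. This sequence satisfies every constraint you use, yet $\sum_{i=0}^N\tau_i\le\bigl(2+\frac{\rho}{\rho-1}\bigr)C$ for all $N$ with $\rho^{N-1}\epsilon\le C$. That is far below $C(N+1)$ for small $\epsilon$, so no invariant can close your induction. Your proposed invariant would not even give (i) at indices with $\tau_{N-1},\tau_N<C$, since its right-hand side is then below $C(N+1)$.

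The missing idea is the multiplicative form contained in Claim~1, $2\sqrt{\tau_{n-2}\tau_n}\ge\underline{\tau}$. The paper's tail argument uses it as $\tau_{k+2}\ge\frac{c_1^2}{4\tau_kL_F^2}=C^2/\tau_k$. A dip to depth $\epsilon$ then forces the step two indices earlier to be at least $C^2/\epsilon$, a surplus of order $C/\epsilon$. The ensuing run below $C$ has length only $O(\log_\rho(C/\epsilon))$, and in it, as you correctly observe, only the first two indices can be second-option steps.

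Be aware that even then the exact constant $C$ needs care when $\rho$ is near $9/8$. For $\rho=9/8$, the values $\tau_{-1}=\tau_0=\rho^5C$, $\tau_1=C$, $\tau_2=\rho^{-5}C$ and $\tau_j=\rho\tau_{j-1}$ for $3\le j\le 6$ respect the product form. However, $\sum_{i=0}^{6}\tau_i\approx 6.36\,C<7C$. So a rigorous version of (i) may have to settle for a smaller constant, which is harmless for the convergence theorem.

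\textbf{Part (ii) is correct and argued differently from the paper.} The paper derives (ii) from (i). If $\{n:\delta_n\ge\varpi\}$ were finite, the steps would be summable. The condition $\tau_{n_j}\ge C$ is then obtained by an averaging comparison with (i).

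You instead argue directly. After $N_0$ the sequence can never cross the level $C$ upward. Once it stays below $C$ from some $N_1$ on, Claim~1 excludes the second option at every $n\ge N_1+2$, which forces geometric growth and a contradiction.

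Your route needs only the additive form of Claim~1 and is independent of (i). So it already supplies everything the convergence theorem actually uses. It also avoids the paper's last step, where a right-hand side tending to $C$ does not by itself contradict $\sum_{i=0}^N\tau_i\ge C(N+1)$.
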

\begin{proof}
(i) Following \cite[Section 4]{Beyond2023}, for $k \geq -1$ let us call by a \textit{tail} a maximal subsequence of consecutive elements $\tau_{k+1},\dots,\tau_{k+m} (m\geq 2)$ such that it starts from $\tau_{k+1} \geq \underline{\tau}/2$ and the rest of the elements are all smaller:
\[
\tau_j < \underline{\tau}/2 \quad \text{for all } j = k+2,\dots,k+m \quad \text{and} \quad \tau_{k+m+1} \geq \underline{\tau}/2,
\]
where $\underline{\tau}$ is defined in \textbf{Claim 2}. Notice that for every such a tail, $\tau_{k+2} < \tau_{k+1}$, which means that the second option for $\tau_{k+2}$ is active. By \textbf{Claim 1}, this implies that $\tau_k \geq \underline{\tau}/2$ due to $\tau_{k+2}< \underline{\tau}/2$. As a result, we can partition step size sequence $\{\tau_i\}_{i=0}^N$ into a non-overlapping sequence of the tail pairs (possibly empty), and elements larger than $\underline{\tau}/2$. It is sufficient to show the bound for the tail pairs in our partition. For a tail sequence $\tau_{k+1},\dots,\tau_{k+m}$, we will show that
$$
\sum_{i=k}^{s+m} \tau_i \geq (m+1) \underline{\tau}/2.
$$

Since the second option is active for $\tau_{k+2}$, as we have already mentioned in \textbf{Claim 1}, $\tau_k \geq \underline{\tau}/2$ and $\tau_{k+2} \geq \frac{c_1^2}{4\tau_k L_F^2}$. Thus, we also incorporate $\tau_k$ into this tail. For elements $\tau_{k+4},\dots,\tau_{k+m}$ only the first option can occur, since otherwise there will be a contradiction with \textbf{Claim 1}. For $\tau_{k+3}$, we consider the first and second options for $\tau_{k+3}$, i.e.,
\[
\tau_{k+3} = \rho \tau_{k+2} \geq \frac{\rho c_1^2}{4\tau_k L_F^2} \quad \text{or} \quad \tau_{k+3} \geq \frac{c_1^2}{4\tau_{k+1} L_F^2}.
\]
Following \cite[Section 4]{Beyond2023} again, for both two cases we can obtain
$\sum_{i=k}^{k+m} \tau_i \geq(m + 1)\underline{\tau}/2.$ Hence for each subsequence of length $N+1$, the sum of its elements satisfies $\sum_{i=0}^{N} \tau_i \geq C(N+ 1)$ with $C=\underline{\tau}/2$.

(ii) For any $\varpi\in(0,1)$, define a subsequence $S:=\{n_i : i \geq 1\} = \{n : \delta_n \geq \varpi\}$. If $\{n_i : i \geq 1\}$ is not infinite, there exists a sufficiently large $M_1\geq 1$ such that $\delta_n < \varpi < 1$ for all $n\geq M_1$. This implies for any $N>M_1$ that,
\ben
\sum_{i=0}^{N} \tau_i &=&\sum_{i=0}^{M_1-1} \tau_i+\sum_{i=M_1}^{N} \tau_i\\
&\leq&\sum_{i=0}^{M_1-1} \tau_i+\tau_{M_1}\sum_{i=M_1}^{N} \varpi^{i-M_1}\\
&\leq&\sum_{i=0}^{M_1-1} \tau_i+\frac{\tau_{M_1}}{1-\varpi}<+\infty,
\een
which contradicts with (i).

In sequel, we prove that, there exist an infinite subsequence $\{n_j : j \geq 1\}$ of $S$ such that $\tau_{n_j} \ge C$. Assume, for contradiction, that subsequence $\{n_j : j \geq 1\}$ is not infinite. Then there exists $M_2 \ge 1$ such that $\tau_{n_i} < C$ for all ${n_i} \geq M_2$ and $n_i\in S$. Noticing $\delta_{n_i}\geq \varpi$ for all $n_i\in S$, all step sizes after increment or no update ($\delta_{n_i}\geq1$) are less than $C$ when ${n_i} \geq M_2$, this implies that $\tau_{n} < C$ for all $n\geq M_2$. For $N > M_2$, we have
\[
\sum_{i=1}^N \tau_i = \sum_{i=1}^{M_2-1} \tau_i + \sum_{i=M_2}^N \tau_i < \sum_{i=1}^{M_2-1} \tau_i + C(N-M_2+1).
\]
Dividing both sides by $N$:
\[
\frac{1}{N}\sum_{i=1}^N \tau_i < \frac{1}{N}\sum_{i=1}^{M_2} \tau_i + C\left(1-\frac{M_2-1}{N}\right).
\]
As $N \to \infty$, the right-hand side tends to $C$, which contradicts $\sum_{i=0}^{N} \tau_i \geq C(N+ 1)$. This completes the proof.
\end{proof}

\begin{theorem}\label{th:graal}
Let $\{x_n\}$ be generated by aPRG-adp (Algorithm \ref{aPRG-adp}), then it converges to a solution of the MVI problem \eqref{mVI}.
\end{theorem}
\proof
Let $ \varpi \in (0, 1)$ and $\underline{\tau} > 0$ be defined in \textbf{Claim 2}. By (ii) of Lemma \ref{lem_bound}, there exists an infinite sequence $\{ n_k : k \geq 1\}$ such that $\tau_{n_k} \geq \underline{\tau}/2$ and $\delta_{n_k} \geq \varpi$.
Let $x^\star \in \cS$ be arbitrary. From \eqref{anbn-relation-GRA-adp} and $\Psi(x^\star,y_n)\geq 0$, it holds
\ben
\widetilde{E}_{n+1}(x^\star) \leq \widetilde{E}_n(x^\star)-\widetilde{R}_n.
\een
The item (i) of Fact \ref{fact_ab} implies that $\lim_{n\rightarrow\infty}\widetilde{E}_{n}(x^{\star})$ exists, $\lim_{n\rightarrow\infty} \widetilde{R}_n = 0$ and $\sum_{n=1}^{\infty}\widetilde{R}_n < \infty$. By the definition of $\widetilde{R}_n$ in \eqref{Rn-PRG-adp}, we have
\be\label{three-lim=0}
\lim\limits_{n\rightarrow\infty}\|y_{{n_k}+1}-y_{n_k}\|=0.
\ee
and $\sum_{n=1}^{\infty}\|y_{{n_k}+1}-y_{n_k}\|^2<\infty$. This together with Lemma \ref{lem-ccstep} gives $\lim_{n\rightarrow \infty}\|x_{n_k}-y_{n_k}\|=0$. Referring to Lemma \ref{lem-ls} and the definition of $\widetilde{E}_{n+1}(x)$, the sequence $\{x_{n}\}$ is bounded, then $\{y_{n}\}$ is bounded due to $y_n=2x_n-x_{n-1}$.

Let $x^*\in\R^q$ be a cluster point of $\{y_n\}_{n\in\N}$. Then, there exists a
subsequence of $\{ n_k : k \geq 1\}$, still denoted as $\{ n_k : k \geq 1\}$, such that $y_{n_k}\rightarrow x^*$, $y_{n_k-1}\rightarrow x^*$ and $x_{n_k}\rightarrow x^*$ as $k\rightarrow\infty$. Now, recalling (\ref{eq:4-GRA}) and $y_{n}-x_{n-1}=2(y_{n}-x_{n})$ gives
\ben
\langle 2(y_{n_k}-x_{n_k})+\tau_{n_k} F(y_{n_k-1}), x- y_{n_k}\rangle\geq \tau_{n_k}
    (g(y_{n_k})-g(x)), ~~\forall x\in \R^q,
\een
and taking the limit-infimum of both sides as $k\rightarrow\infty$ shows that $x^*\in\cS$.

Since $x=x^\star\in\cS$ was chosen in Lemma \ref{lem-ls} to be arbitrary, we can now set $x^\star=x^*$. It then follows that $\lim_{k\rightarrow \infty} \|y_{n_k}-x^*\| = 0$, and consequently, $\lim_{k\rightarrow \infty}\widetilde{E}_{n_k}(x^*)= 0$. Furthermore, it is clear from \eqref{anbn-relation-GRA-adp} that the whole sequence $\{\widetilde{E}_n(x^*)\}$ is monotonically nonincreasing. Therefore, the whole sequence $\{\widetilde{E}_n(x^*)\}$ must converge to $0$.
As a result, we have $\lim_{n\rightarrow\infty}y_{n} = x^*$.
By noting Lemma \ref{lem-ccstep} again, we obtain $\lim_{n\rightarrow\infty}x_n = x^*$.

\endproof
\section{Numerical results}
\label{sec-experiments}
In this section, numerical experiments are presented to assess the performance of aPRG (Algorithm \ref{aPRG}) and its adaptive version, i.e., aPRG-adp (Algorithm \ref{aPRG-adp}). Moreover, we test aPRG-adp using the specific rule \eqref{step-special} and denote by aPRG-adp-sr.
The first test is the benchmark example to elaborate the respective roles of the averaging and extrapolation steps. The other tests, including Nash-Cournot equilibrium, HpHard and Tomography reconstruction problems, are conducted to illustrate the efficiency of the proposed method. We also compare the proposed aPRG and aPRG-adp with the following methods:
\bi
\item PRG (proximal reflected gradient method \cite{PRG2015Malitsky}) with step size $\lambda=(\sqrt{2}-1)/L_F$;
\item TFBF-L (Tseng's forward backward forward with linesearch);
\item FRB-L (forward reflected backward method with linesearch \cite{Malitsky2020GFBS});
\item GRAAL ($\psi = 1.6$ and step size $\lambda=\psi/(2L_F)$) and aGRAAL (adaptive step sizes) \cite{Malitsky2019Golden};
\ei

All the experiments were carried out on a 64-bit Windows system with an Intel(R) Core(TM) i5-4590 processor (3.30 GHz) and 8 GB RAM, and all the results are reproducible by specifying the \texttt{seed} of the random number generator in the code accessible at \url{https://github.com/xkchang-opt/aPRG-adp}.

\subsection{Example for exploring the 1/2-averaged step}
\label{sec:Examples}
We test a specific MVI problem \eqref{mVI} in $\R^2$ with $g=0$ and $F(x)=Mx$ with
\ben
M=\left[\ba{cc}0,~&~-1\\ 1,~&~0\ea\right].
\een
Applying the 1/2-averaged FB iteration, we have
\ben
x_{n+1}=\frac{1}{2}x_{n}+\frac{1}{2}(x_n-\tau F(x_n))=x_n-\frac{\tau}{2} Mx_n=(I-\frac{\tau}{2}M )x_n,
\een
with step size $\tau>0$. Since the two eigenvalues of $I-\frac{\tau}{2}M $ are given by $\lambda(I-\frac{\tau}{2}M )= 1\pm \frac{\tau i}{2}$, thus it holds $|\lambda(I-\frac{\tau}{2}M)|=\sqrt{1+\tau^2/2}>1$ for any $\tau>0$. This implies that the 1/2-averaged forward-backward iteration \eqref{averaged-only2} is not necessarily convergent. In other words, the 1/2-averaged step alone cannot guarantee the convergence of the forward-backward algorithm for solving the MVI problem \eqref{mVI}.

\begin{figure}[htp]
\centering\subfigure[Results obtained from averaged FB and aPRG.]{
\includegraphics[width=0.45\textwidth]{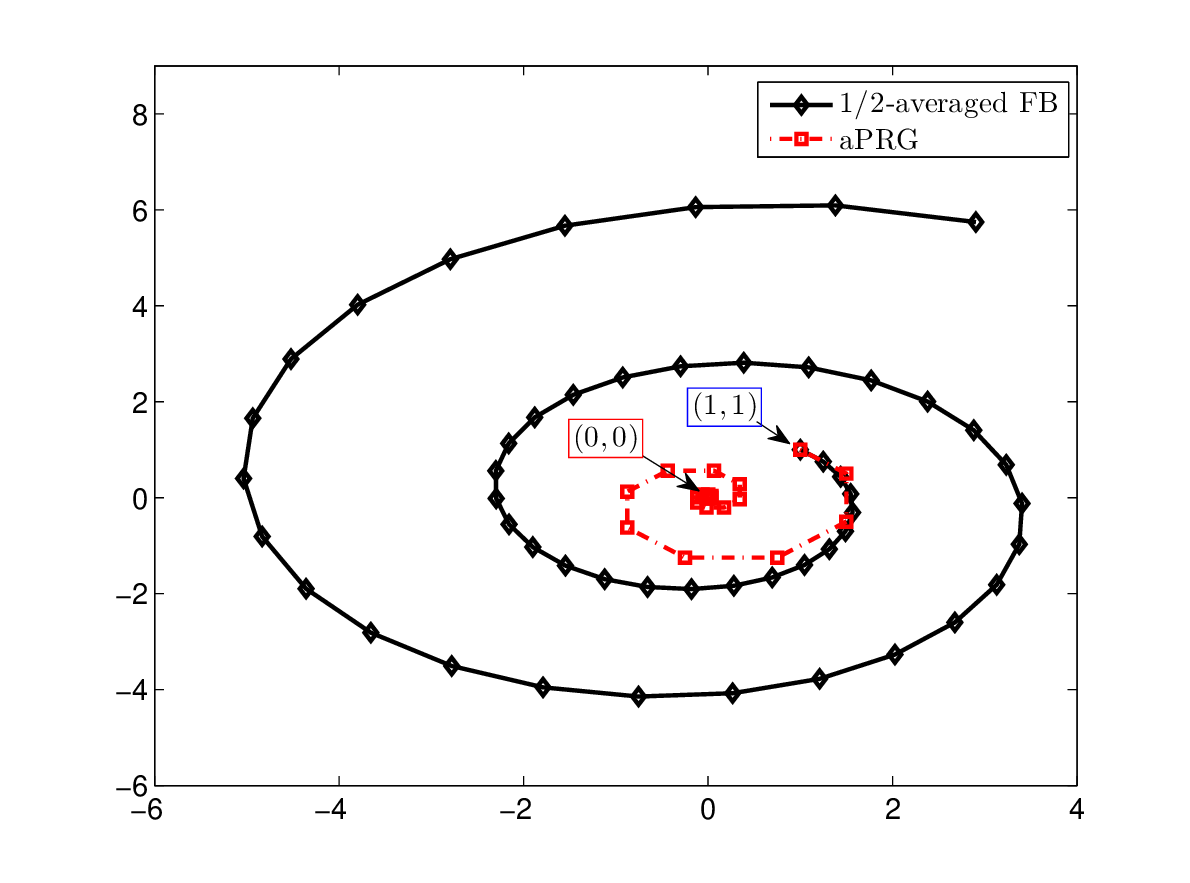}}
\subfigure[$\|x_n-(0,0)\|$]{
\includegraphics[width=0.45\textwidth]{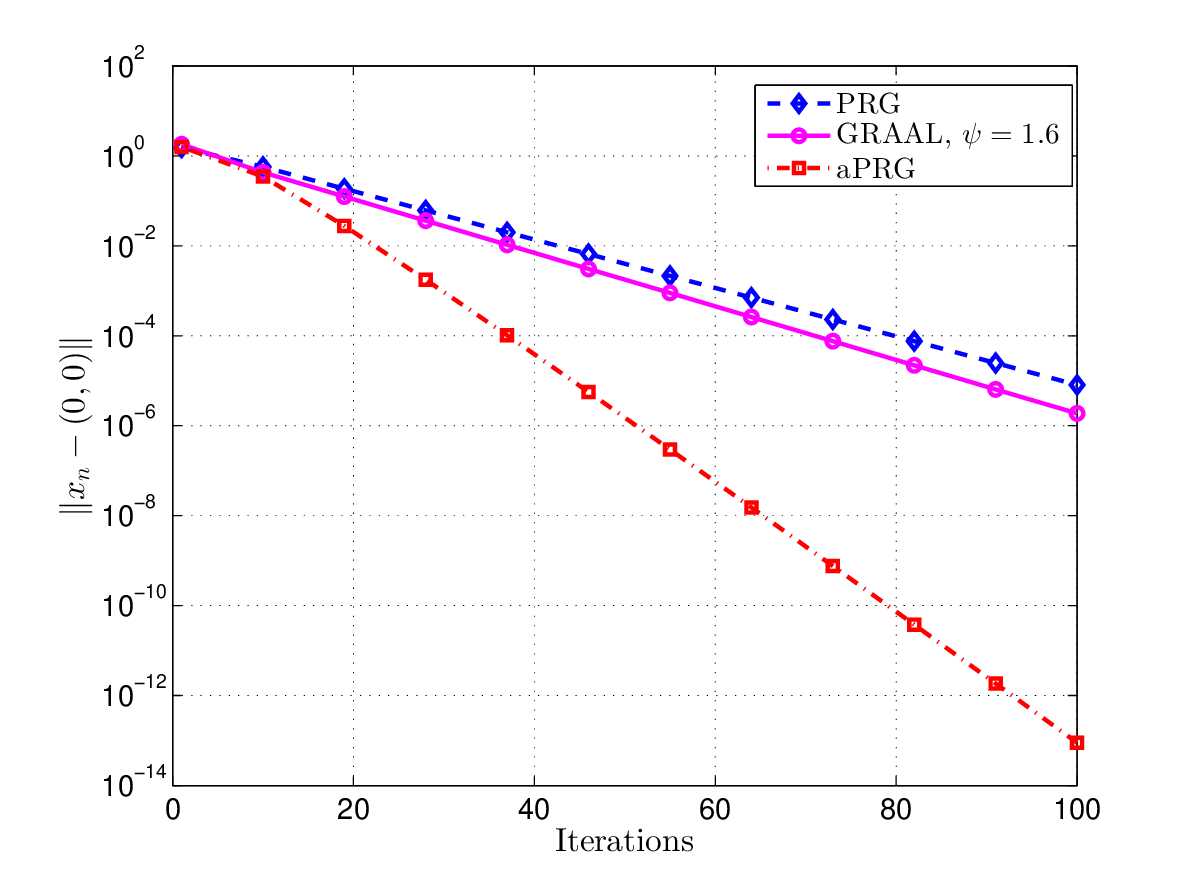}}
\caption{Results for the special MVI problem with $g=0$ and $F(x)=Mx$. }
\label{Fig:M}
\end{figure}

Since the specific MVI problem \eqref{mVI} with $g=0$ and $F(x)=Mx$ has the unique solution point $x^\star=(0,0)$, we tested PRG, GARRL with $\psi=1.6$ and aPRG, and compared the results of residual $\|x_n-(0,0)\|$. The initial point is selected as $(x_0,y_0)=(1,1)$. From Figure \ref{Fig:M} (a), the 1/2-averaged FB is divergent for this problem, and aPRG converges to the solution $x^\star=(0,0)$. The plot of residual $\|x_n-(0,0)\|$ shown in Figure \ref{Fig:M} (b) illustrates aPRG has better performance than GRAAL and PRG, due to larger step size.

\subsection{Nash--Cournot equilibrium}

In this section, we test the Nash-Cournot oligopolistic equilibrium model as in \cite[Section 5.1]{Malitsky2019Golden}. There are $m$ firms, each of them supplies a homogeneous product in a non-cooperative fashion. Let $q_i \geq 0$ denote the $i$-th firm's supply at cost $f_i(q_i)$ and $Q = \sum_{i=1}^m q_i$ be the total supply in the market. Let $p(Q)$ denote the inverse demand curve. A variational inequality that corresponds to the equilibrium is
\ben
\text{find } q^* = (q_1^*, \dots, q_m^*) \in \mathbb{R}_+^m~~ \text{ s.t. }~~ \langle F(q^*), q - q^* \rangle \geq 0, \quad \forall q \in \mathbb{R}_+^m,
\een
where $F(q^*) = (F_1(q^*), \dots, F_n(q^*))$ and
\ben
F_i(q^*) = f_i'(q_i^*) - p\Bigl(\sum_{j=1}^m q_j^*\Bigr) - q_i^* p'\Bigl(\sum_{j=1}^m q_j^*\Bigr).
\een

Following \cite{Malitsky2019Golden}, we assume that the inverse demand function $p$ and the cost function $f_i$ take the form:
\[
p(Q) = 5000^{1/\gamma} Q^{-1/\gamma} \quad \text{and} \quad f_i(q_i) = c_i q_i + \frac{\beta_i}{\beta_i+1} L_i^{\frac{1}{\beta_i}} q_i^{\frac{\beta_i+1}{\beta_i}},
\]
where we set $\gamma = 1.5$, $\beta_i \sim \mathcal{U}(0.3, 4)$, $c_i \sim \mathcal{U}(1, 100)$ and $L_i \sim \mathcal{U}(0.5, 5)$. In our tests, we set $m =500$ and $m= 1000$ and generate data randomly.

For comparison we use the residual
\be\label{def:r_n}
r_n:= \|y_{n+1}- P_{\mathbb{R}_+^m}(x_n - F(y_n))\|
\ee
to terminate algorithms when $r_n\leq \epsilon$, which we compute in every iteration. The starting point is $x_0 = (1,\dots,1)$. Furthermore, all the tested algorithms were terminated as well if a maximum number of iterations, named $n_{\max}$, was reached.
In this set of experiments, we set $\epsilon=10^{-8}$ and $n_{\max}=5\times 10^5$.

We first explored the performance of aPRG-adp with different $r$, and ran a
grid search for $r\in [0.005, 0.95]$. From the results of iterations, we observed that the performance of aPRG-adp is generally sensitive to the choice of $r$, but aPRG-adp with $r= 0.1$ performed well enough on the presented examples. The results of iterations from aPRG-adp with $r\in\{0.005, 0.01, 0.05, 0.1, 0.4, 0.7\}$, for solving the problem with $m=500$, are given in Figure \ref{Fig:test-r}. Thus in this section, we set $r=0.1$ for all the tested problems.

\begin{figure}[htbp]
\centering
\subfigure[$m=500$, \texttt{seed}=1]{
\includegraphics[width=0.4\textwidth]{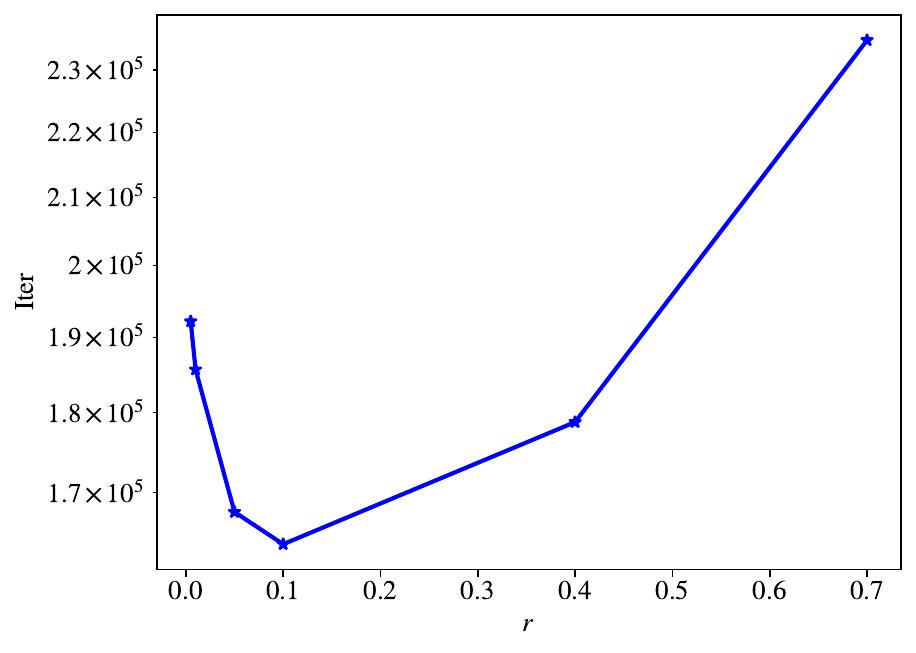}}
\subfigure[$m=500$, \texttt{seed}=2]{
\includegraphics[width=0.4\textwidth]{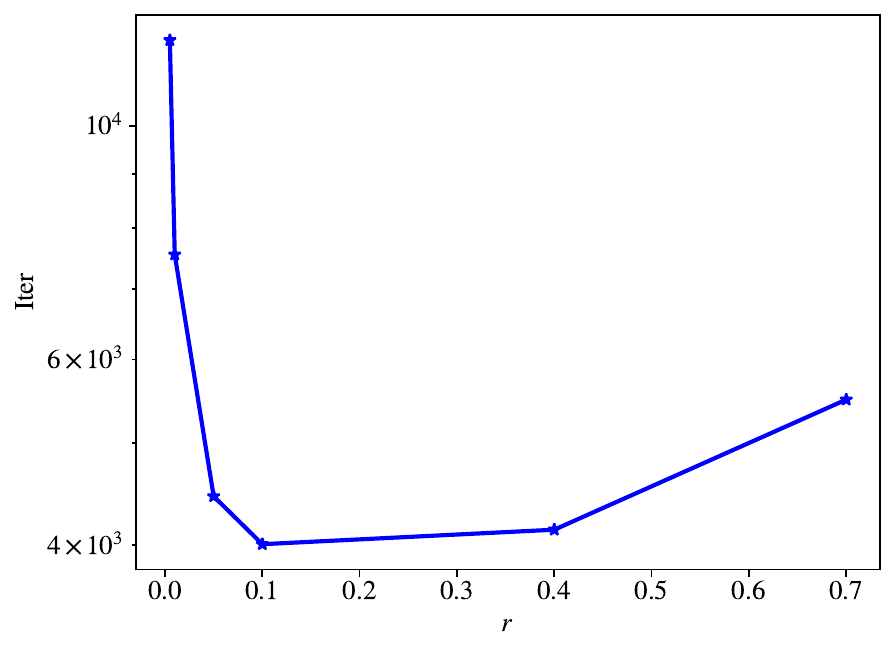}}
\caption{Results of iterations from aPRG-adp with different $r$. }
\label{Fig:test-r}
\end{figure}

We compare aPRG-adp with aPRG-adp-sr, aGRAAL, FRB and Tseng's FBF method with linesearch. Table \ref{table-Nash} reports the number of iterations (\texttt{Iter}), total CPU time (\texttt{Time}, in seconds), and the number of extra linesearch trial steps (\#LS) required by FRB-L. The results of residual $r_n$ with respect to \texttt{Time} are plotted in Figure \ref{Fig:Nash} for two cases. Since Tseng's FBF can not terminate within the maximum number of iterations, and aPRG-adp-sr delivers comparable yet marginally worse performance relative to aPRG-adp for all the tested cases, their results do not illustrate in Table \ref{table-Nash}.

\begin{table}[htpb]
\caption{Comparison results of aGRAAL, FRB-L and aPRG-adp on the Nash-Cournot equilibrium problems with different values of $(n, seed)$.}
\label{table-Nash}
\center
\small
\begin{tabular}{|c|c|rccc|rcc|rcc|}
\hline
 &  &\multicolumn{4}{|c|}{FRB-L}&\multicolumn{3}{|c|}{aGRAAL}& \multicolumn{3}{|c|}{aPRG-adp}\\
$m$&\texttt{seed} &\texttt{Iter} &\texttt{Time} &\#LS & $r_n$ &\texttt{Iter}& \texttt{Time}& $r_n$ &\texttt{Iter}& \texttt{Time}& $r_n$ \\
\hline
 \multirow{5}{*}{500}&  1  &345871  &25.1  &345878  &1.0e-8    &409172  &22.3   &1.0e-8   &163927  &8.2 	&1.0e-8\\
 &  2   &7894  &0.6  &7893  &1.0e-8   &9616  &0.5   &1.0e-8    &3946  &0.2  	& 1.0e-8	\\
 &  3   &500000  &35.6  &500015  &\textbf{3.5e-5}    &500000  & 26.7  &\textbf{7.2e-5}     &500000  & 24.1 	&\textbf{1.7e-8}   	\\
 &  4 &6894  &0.5  &6891  &1.0e-8    &8513  &0.5   &1.0e-8      &3920  & 0.2 	&9.8e-9	\\
 &  5 &500000  &35.3  & 500016 & \textbf{2.4e-3}   &500000  &26.3   &\textbf{2.9e-3}      &500000  &23.7  	&\textbf{9.7e-5}   	\\
\hline
\multirow{5}{*}{1000}&  1  &44330  &4.3  &44333  &1.0e-8    &51674  &3.8   &1.0e-8     &20914  &1.4  	&1.0e-8	\\
 &  2   &500000  &47.6  &500014  &\textbf{6.8e-4}   &500000  &36.8   &\textbf{9.0e-4}          &500000  &32.3  	&\textbf{8.3e-6} 	\\
 &  3   &500000  &47.0  &500020  &\textbf{5.5e-2}   &500000  &36.9   &\textbf{6.6e-2}     &500000  &32.1  	&\textbf{4.4e-2}   	\\
 &  4 &220780  &21.2  &220791  &1.0e-8   &261572  &19.3   &1.0e-8     &106766  &7.1  	&1.0e-8  	\\
 &  5 &151578  &14.4  &151584  &1.0e-8    &175447  &12.8   &1.0e-8    &72194  &4.7  	&1.0e-8	\\
\hline
\end{tabular}
\end{table}

We can see from Table \ref{table-Nash} and Figure \ref{Fig:Nash} that aPRG-adp substantially outperforms the other tested methods. Specifically, both aGRAAL and aPRG-adp are free of line searches. The iteration count and CPU time required by aPRG-adp are roughly half of those for aGRAAL. In terms of iterations, FRB-L outperforms aGRAAL. Nevertheless, FRB-L requires on average about one extra linesearch trial per iteration, which leads to higher CPU consumption for FRB-L.

\begin{figure}[htbp]
\centering
\subfigure[$m=500$, \texttt{seed}=1]{
\includegraphics[width=0.45\textwidth]{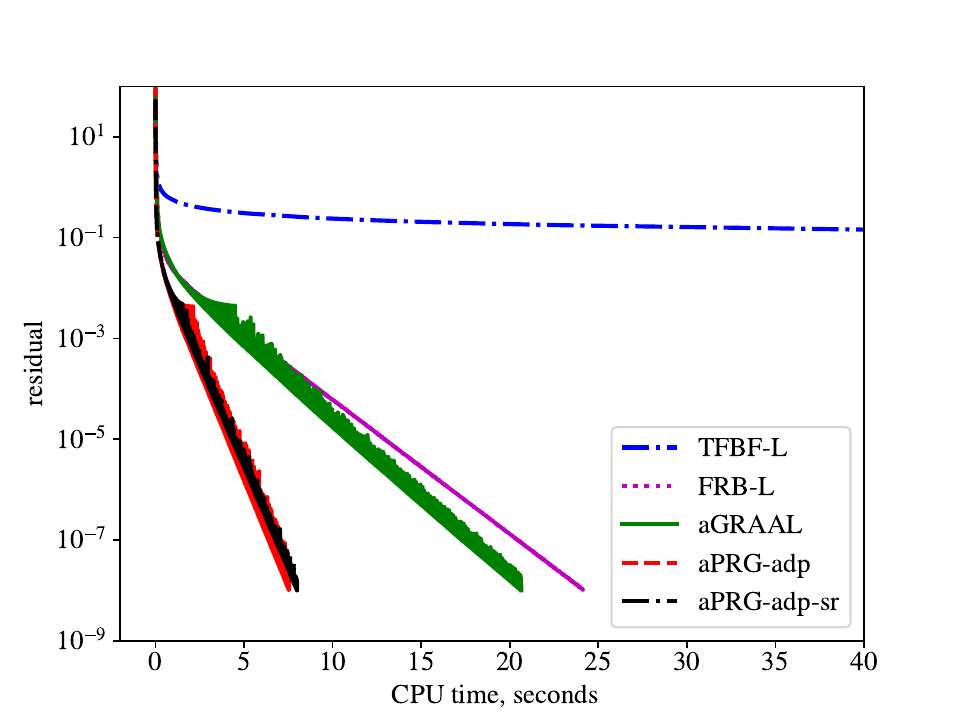}}
\subfigure[$m=1000$, \texttt{seed}=2]{
\includegraphics[width=0.45\textwidth]{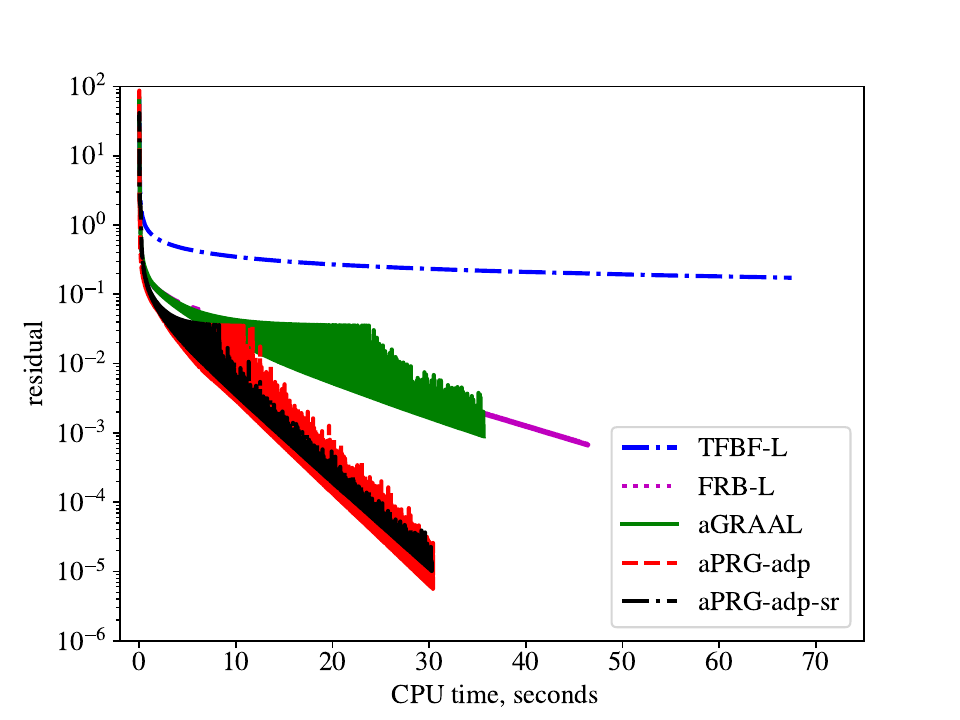}}
\caption{Results of residual for Nash--Cournot equilibrium problem. }
\label{Fig:Nash}
\end{figure}

In Figure \ref{Fig:Nash-step}, we plot the step-size magnitudes for the first 500 iterations obtained from the simulations to analyze the variation of the step-size sequence. Furthermore, we adopt sliding window averaging (denoted by ``-ave") with different window sizes to smooth the step-size sequence and facilitate refined analysis. As observed from Figures \ref{Fig:Nash-step} (a) and \ref{Fig:Nash-step}(c), once the step size falls below a certain threshold, it grows monotonically with variable speed until hitting a higher threshold. Subsequently, the step size enters a descending phase of indeterminate length. Once the step size crosses the lower threshold again, this cycle recurs. This oscillatory phenomenon consistently appears across all numerical simulations. Comparing with aGRAAL, aPRG-adp allows for a wider range and a larger growth rate of step sizes, and yields larger step sizes after certain iterations, as shown in Figures \ref{Fig:Nash-step} (b) and \ref{Fig:Nash-step}(d). Furthermore, we observe that the step sizes of aGRAAL are largely restricted by its inherent growth rate, which constitutes the primary reason for its unsatisfactory numerical performance.

\begin{figure}[htbp]
\centering
\subfigure[$m=500$, \texttt{seed}=1, window = 50]{
\includegraphics[width=0.43\textwidth]{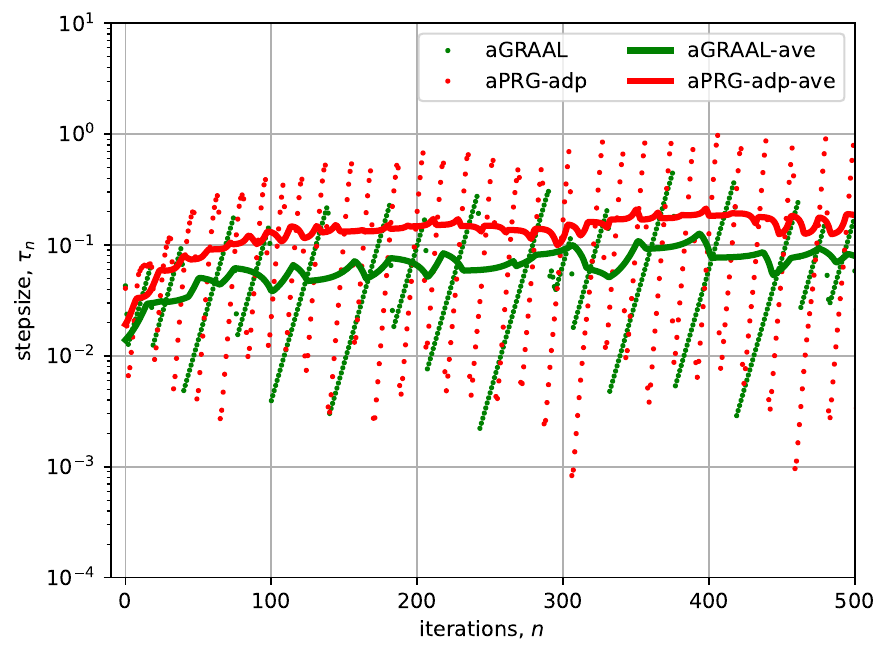}}
\subfigure[$m=500$, \texttt{seed}=1, window = 100]{
\includegraphics[width=0.43\textwidth]{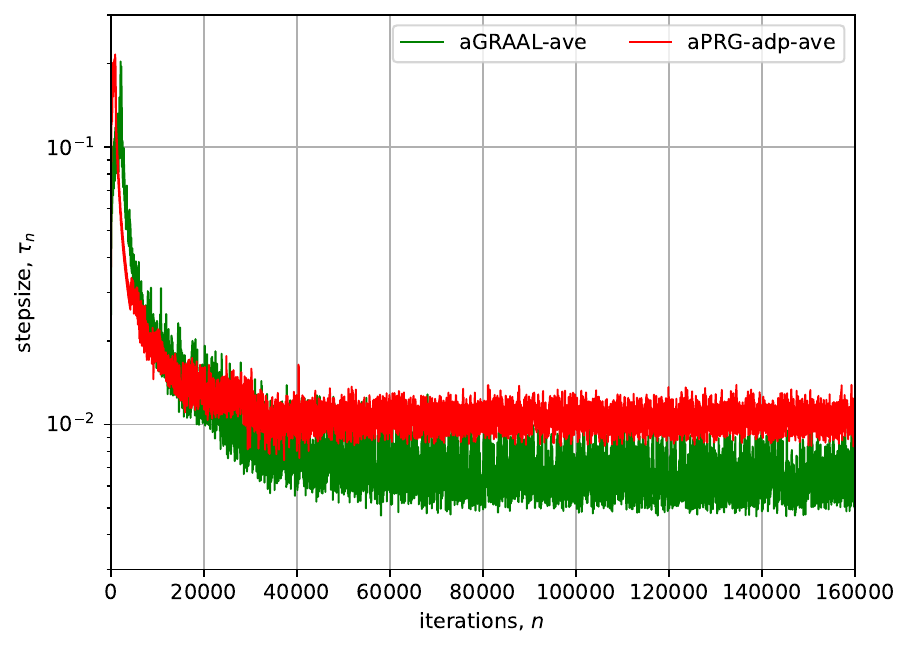}}
\subfigure[$m=1000$, \texttt{seed}=2, window = 50]{
\includegraphics[width=0.43\textwidth]{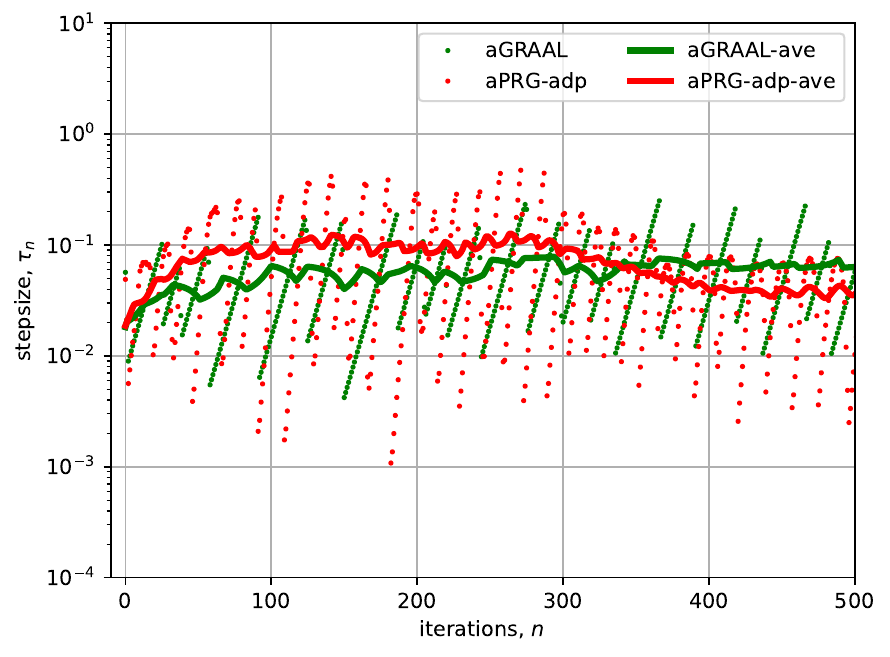}}
\subfigure[$m=1000$, \texttt{seed}=2, window = 300]{
\includegraphics[width=0.43\textwidth]{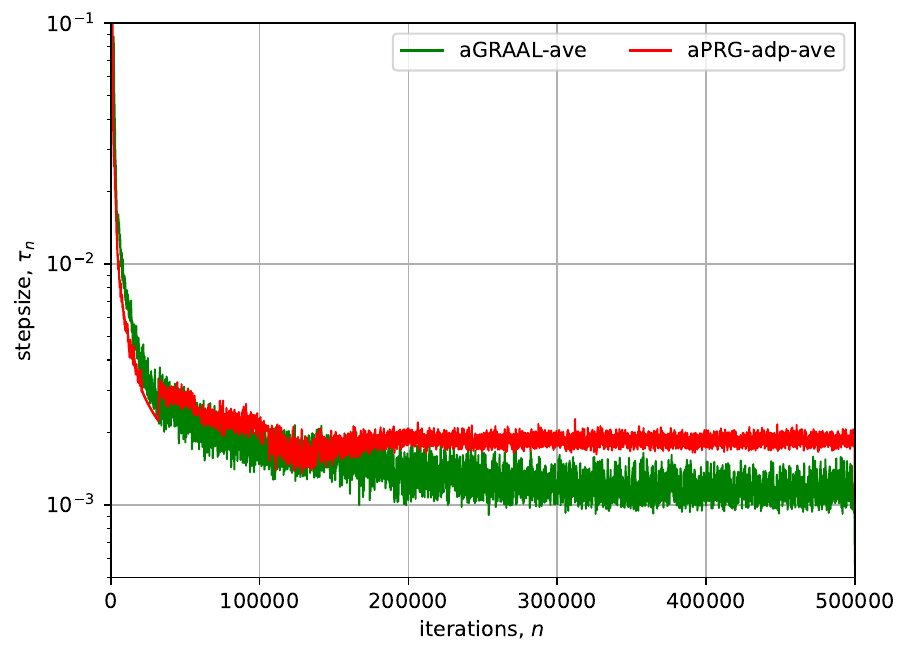}}
\caption{Results for Nash--Cournot equilibrium problem. }
\label{Fig:Nash-step}
\end{figure}

\subsection{HpHard problem}
The HpHard problem is to find $x^\star\in \R^m$, such that
\be\label{HpHard}
\langle x-x^\star, F(x^\star)\rangle \geq 0, \quad \forall x \in C,
\ee
where $F(x) = Mx +q$ with $M = NN^T + S + D$ and $q \in \bR^m$, $N$, $D$ and $S\in\bR^{m\times m}$, $S$ is a skew-symmetric matrix, $D$ is diagonal matrix, whose diagonal entries are nonnegative (so $M$ is positive semidefinite). The set $C$ is called a feasible set.

In this section, every entry of $N$ and $S$ is uniformly generated from $(-5, 5)$, the diagonal entry of $D$ is uniformly generated from $(0, 0.3)$ and every entry of $q$ uniformly generated from $(-500, 0)$. We test two feasible sets:
\ben
C_1 = \{x\in\bR^m_+~|~ \sum_{i=1}^m x_i=m\}~~\mbox{and}~~C_2 = \{x\in\bR^m~|~ x_i\in [-2,5], i=1,2,\dots,m\}.
\een
Since solutions of the problem (\ref{HpHard}) coincide with zeros of the residual $r_n$ defined in \eqref{def:r_n}, thus we test and terminate methods by using the relative residual: $r_n/(1+\|M\|)<\epsilon$ with given $\epsilon>0$ or a maximum
number of iterations was reached ($n_{\max}=1\times 10^5$).

We have generated randomly $M$ and $q$ with $seed=1$ for every fixed $m$. For all tests, we take $x_0=(1,1,\cdots,1)$. Since $F$ is an affine operator, the Lipschitz constant of $F$ can be estimated by $L_F=\|M\|$. Firstly, we tested aPRG ($\tau=0.99/L_F$) and compare it with PRG ($\tau=0.41/L_F$) and GRAAL ($\psi=1.6$ and $\tau=\psi/(2L_F)$). The numerical results are reported in Table \ref{table2} and Figure \ref{Fig:NHpHard-fixed-step}. It can be clearly observed that the proposed averaged PRG outperforms the other competing methods.

\begin{table}[htpb]
\caption{Comparison results of PRG, GRAAL and aPRG with fixed step size and $\epsilon=10^{-6}$ on the HpHard  problems.}
\label{table2}
\center
\small
\begin{tabular}{|c|c|rc|rc|rc|}
\hline
 &  &\multicolumn{2}{|c|}{PRG}& \multicolumn{2}{|c|}{GRAAL}& \multicolumn{2}{|c|}{aPRG}\\
set & $m$ &\texttt{Iter} &\texttt{Time}  &\texttt{Iter}& \texttt{Time} &\texttt{Iter}& \texttt{Time}\\
\hline
 \multirow{3}{*}{$C_1$}&  1000  &21050  &7.9  &28770  &10.9    &17261  &6.4 \\
 &  3000   &38600  &189.0  &52756  &268.0    &31652  &163.1	\\
 &  5000 &54604  &856.0  &74627  &1164.1    &44775  &694.2 	\\
\hline
\multirow{3}{*}{$C_2$}&  200  &18022 &0.6  &24633  &0.8    &14778 &0.5	\\
&  500   &52625  &4.3  &71923  &5.7    &43152  &3.4 	\\
\cline{2-8}
 & \multirow{2}{*}{1000}  &100000  &29.9  &100000  &29.9    &100000  &29.9  	\\
 & & \multicolumn{2}{|c|}{$r_n$=\textbf{4.2e-6}} & \multicolumn{2}{|c|}{$r_n$=\textbf{2.3e-5}}& \multicolumn{2}{|c|}{$r_n$=\textbf{1.1e-6}}\\
\hline
\end{tabular}
\end{table}
\begin{figure}[htbp]
\centering
\subfigure[Set $C_1$ with $m=3000$.]{
\includegraphics[width=0.45\textwidth]{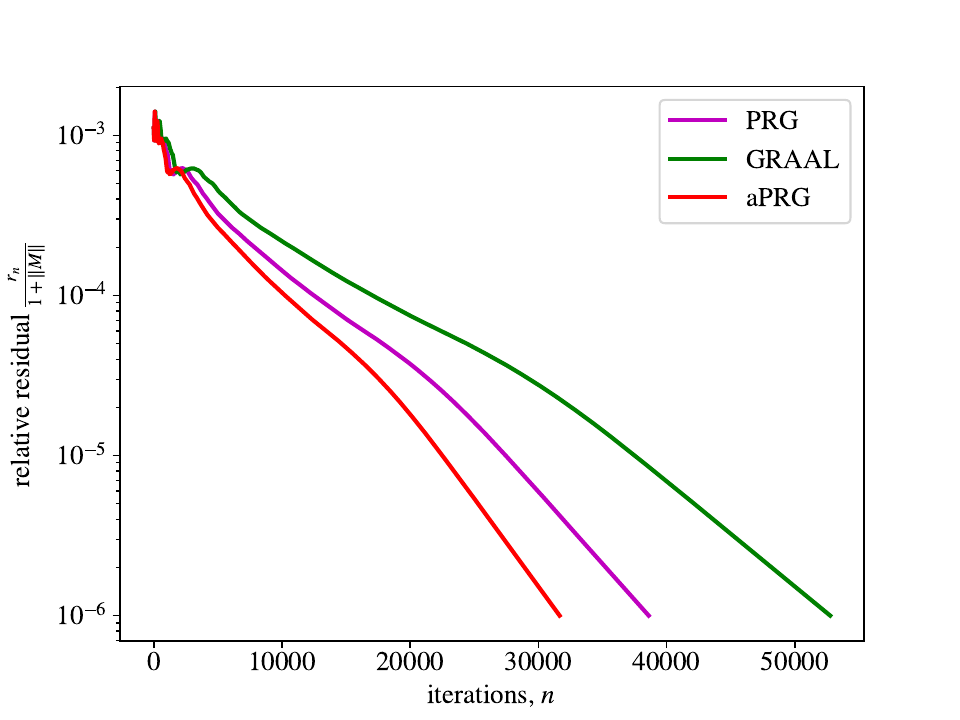}}
\subfigure[Set $C_2$ with $m=500$.]{
\includegraphics[width=0.45\textwidth]{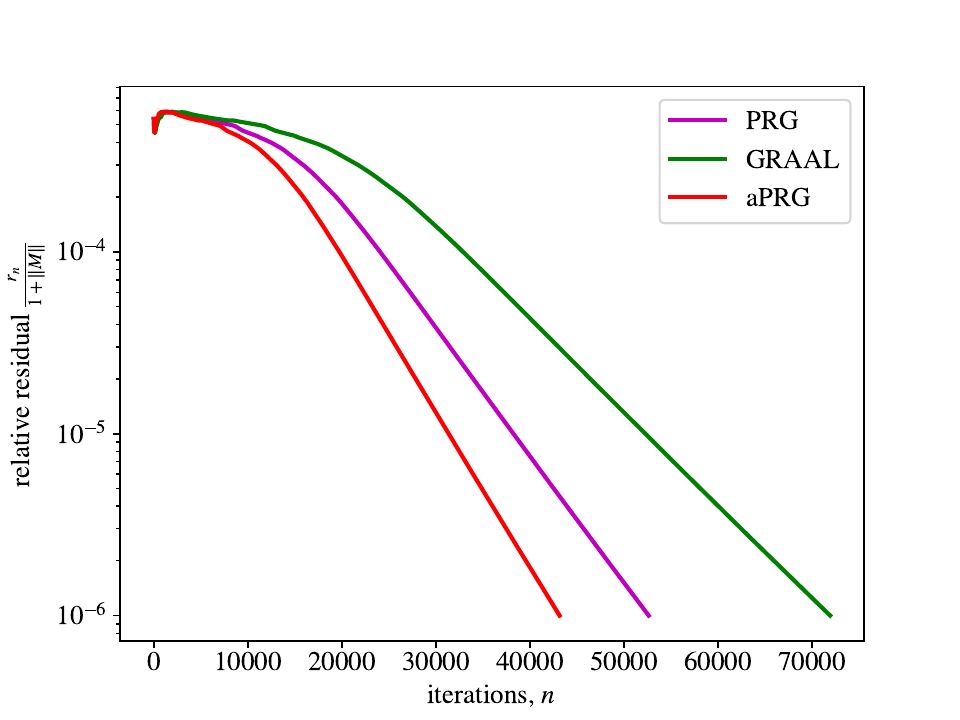}}
\caption{Results of relative residual from the tested methods with fixed step size for HpHard problem.}
\label{Fig:NHpHard-fixed-step}
\end{figure}

Moreover, we tested the adaptive methods for solving the HpHard problems, with the numerical results illustrated in Table \ref{table3} and Figure \ref{Fig:NHpHard-adaptive}. Based on these findings, it can be observed that adaptive methods (including aPRG-adp-sr shown in Figure \ref{Fig:NHpHard-adaptive}) without linesearch exhibit a distinct advantage over those utilizing linesearch in terms of CPU time, despite the fact that TFBF-L and FRB-L require fewer iterations than aGRAAL. Furthermore, the superiority of aPRG-adp and aPRG-adp-sr is particularly pronounced.

\begin{table}[htpb]
\caption{Comparison results of adaptive methods: TFBF-L, FRB-L, aGRAAL and aPRG-adp with $\epsilon=10^{-10}$ on the HpHard  problems.}
\label{table3}
\center
\small
\begin{tabular}{|c|c|rcc|rcc|rc|rc|}
\hline
 &  &\multicolumn{3}{|c|}{TFBF-L}&\multicolumn{3}{|c|}{FRB-L}& \multicolumn{2}{|c|}{aGRAAL}& \multicolumn{2}{|c|}{aPRG-adp}\\
set & $m$ &\texttt{Iter} &\texttt{Time} &\#LS &\texttt{Iter}& \texttt{Time}&\#LS &\texttt{Iter}& \texttt{Time}&\texttt{Iter}& \texttt{Time}\\
\hline
 \multirow{3}{*}{$C_1$}&  1000  &1752  &1.3  &2047  &2131    &1.3  &2135   &2721   &1.0  &1191  	&0.5\\
 &  3000   &1836  &18.9  &2149  &2241    &16.0  &2244   &2871    &13.6  &1258  	&6.0 	\\
 &  5000   &2020  &65.0  &2361  &2492    &58.2  &2496   &3165     &49.5  & 1372 	&21.3  	\\
\hline
\multirow{3}{*}{$C_2$}&  1000  &10151  &6.1  &11595 &15020  &6.7    &15024  &17940   &5.4     &7135  &2.1  	 	\\
 &  3000   &26240  &264.0  &29891  &38874    &283.6  &38878   &45888      &222.2  &18303  	&88.9  	\\
 &  5000   &40634  &1280.4  &46257  &60171    &1373.8  &60175   & 72098    &1099.3  &28602  	&434.6    	\\
\hline
\end{tabular}
\end{table}

\begin{figure}[htbp]
\centering
\subfigure[Set $C_1$ with $m=5000$.]{
\includegraphics[width=0.47\textwidth]{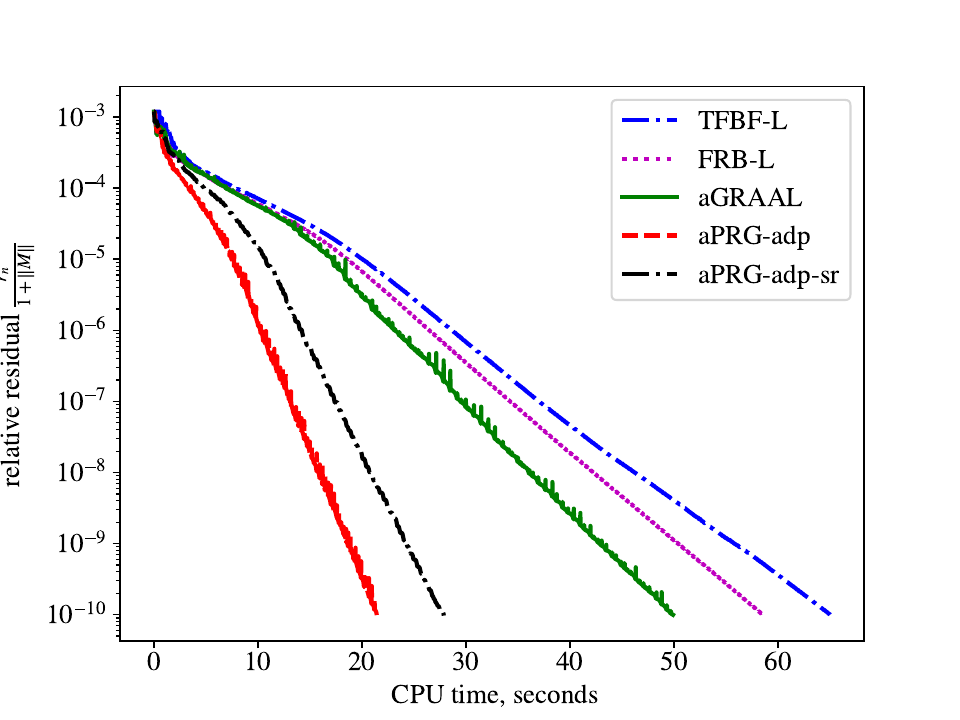}}
\subfigure[Set $C_2$ with $m=5000$.]{
\includegraphics[width=0.47\textwidth]{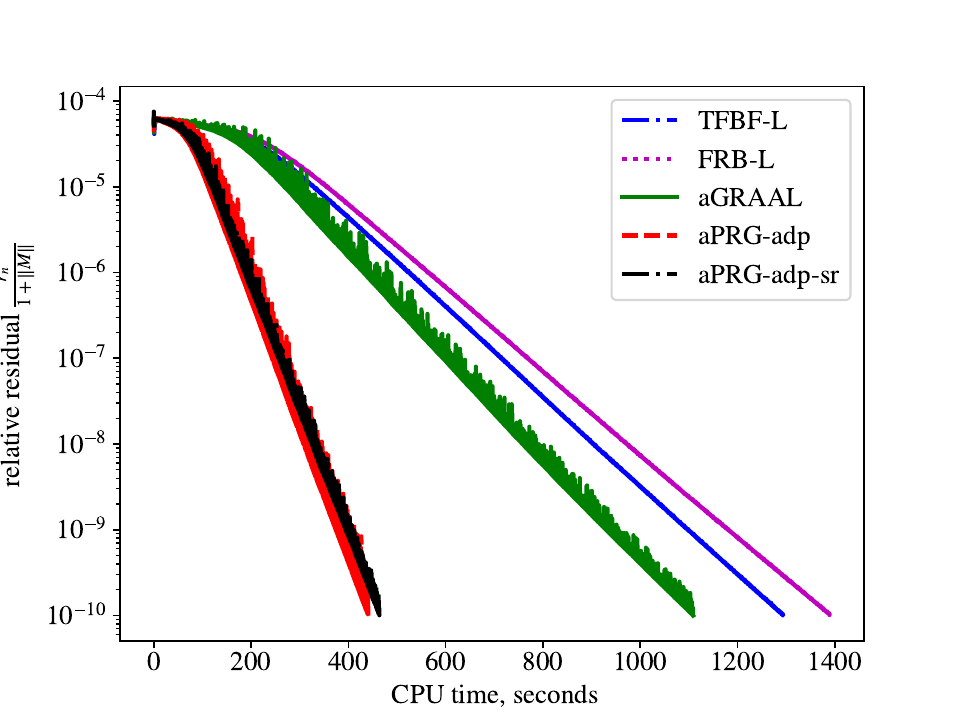}}
\caption{Results of relative residual from the tested methods with adaptive step sizes for HpHard problem.}
\label{Fig:NHpHard-adaptive}
\end{figure}

\subsection{Tomography reconstruction}
The tomography reconstruction problem is an instance of a linear inverse problem
\[
Ax = \hat{b},
\]
where $x \in \mathbb{R}^n$ is the unknown image, $A \in \mathbb{R}^{m \times n}$ is the projection matrix, and $\hat{b} \in \mathbb{R}^m$ is the given sinogram. In practice, however, $\hat{b}$ is contaminated by some noise $\varepsilon \in \mathbb{R}^m$, so we observe only $b = \hat{b} + \varepsilon$.

Let $C_i = \{x: \langle A_i, x \rangle = b_i\}$ for $i=1,\cdots,m$, the tomography reconstruction problem can be formulated as convex feasibility problem: $x\in \bigcap_{i=1}^m C_i$. As the projection onto $C_i$ is given by $P_{C_i}x = x - \frac{\langle A_i, x \rangle - b_i}{\|A_i\|^2} A_i$, computing $Tx = \frac{1}{n} \sum_{i=1}^n P_{C_i}x$ reduces to the matrix-vector multiplications which is realized efficiently in most computer processors. As in \cite{Malitsky2019Golden}, we set $g=0$ and $F(x)=(I-T)(x)$, which is monotone, and solve the corresponding MVI.

We reconstructed the Shepp-Logan phantom image $256 \times 256$ (thus, $x \in \mathbb{R}^n$ with $n = 2^{16}$) from the far less measurements $m = 2^{15}$. We firstly generated the matrix $A \in \mathbb{R}^{m \times n}$ from the \texttt{scikit-learn} library and define $b = Ax + \varepsilon$, where $\varepsilon \in \mathbb{R}^m$ is a random vector, whose entries are drawn from $\mathcal{N}(0,1)$. Then we tested two adaptive methods: aPRG-adp and aGRAAL. The starting point was chosen as $x^1 = (0,\dots,0)$ and $\tau_0 = 1$.

\begin{figure}[htbp]
\centering
\subfigure[Residual]{
\includegraphics[width=0.43\textwidth]{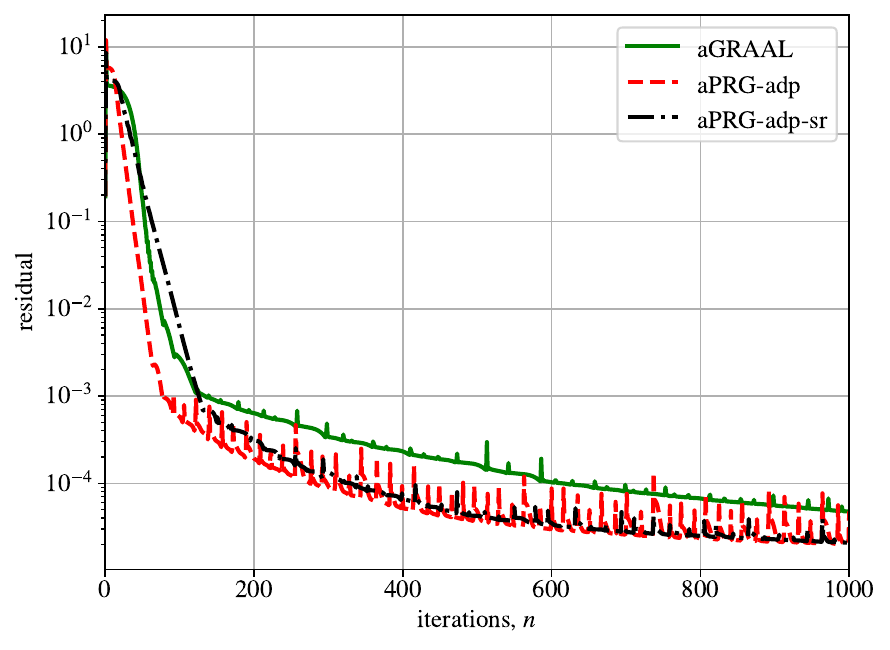}}
\subfigure[Step sizes]{
\includegraphics[width=0.43\textwidth]{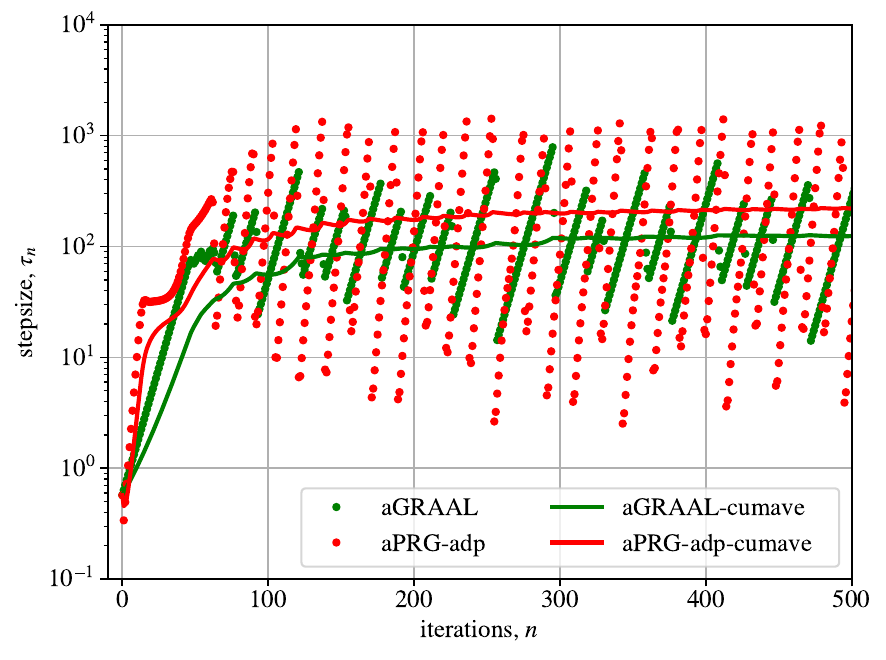}}
\caption{Results for Tomography reconstruction problem.}
\label{Fig:tomo}
\end{figure}

In Figure \ref{Fig:tomo} (a) we plot the residual $\|F(x_n)\|=\|x_n - Tx_n\|$ from aGRAAL and two adaptive aPRGs with respect to the number of iterations, while Figure \ref{Fig:tomo} (b) presents the step-size magnitudes from aGRAAL and aPRG-adp for the first 500 iterations derived from the simulations.
Compared with aGRAAL, aPRG-adp exhibits superior performance: it accommodates a wider range of step sizes and consistently yields larger step sizes, which contributes to its faster convergence. It can be readily observed from Figure \ref{Fig:tomo} (b)  that the step sizes of aGRAAL are largely constrained by its growth rate, which results in its degraded numerical performance.

\section{Conclusions and further directions}
\label{sec_conclusion}
In this paper, we proposed, analyzed, and tested an averaged proximal reflected gradient (aPRG) method for solving monotone variational inequality (MVI) problems. Under a fixed step size derived from the global Lipschitz constant, we theoretically improve the convergence condition, such that the step size satisfies $\tau\in(0, 1/L_F)$. This condition is compatible with that of classical methods, including Popov's extragradient method and the forward-reflected-backward method. Furthermore, we present an adaptive and efficient step size estimation strategy that eliminates the need for line searches. This strategy incorporates a parameter that directly regulates both the step size generation mechanism and the Lyapunov function used for convergence analysis.

\section*{Acknowledgements}
The authors would like to express gratitude towards the authors of \cite{Malitsky2019Golden} for sharing their codes, which were used for fair comparisons in this study.

\bibliographystyle{abbrv}
\bibliography{cxktex}

\end{document}